\documentclass[12pt,a4paper,oneside,openright]{amsart}

\usepackage[a4paper,top=2cm,bottom=2cm,left=2cm,right=2cm]{geometry}
\date{}
\usepackage[utf8]{inputenc}
\usepackage{amsmath}
\usepackage{amsfonts}
\usepackage{amssymb}
\usepackage{amsthm}
\usepackage{esint}
\usepackage{graphicx}
\usepackage[english]{babel}
\usepackage[T1]{fontenc}

\usepackage{graphicx}
\usepackage{xcolor}
\graphicspath{{./Images/}}

\newtheorem{thm}{Theorem}[]
\newtheorem{cor}[thm]{Corollary}
\newtheorem{lem}[thm]{Lemma}
\newtheorem{prop}[thm]{Proposition}

\theoremstyle{definition}
\newtheorem*{mydef}{Definition}

\theoremstyle{remark}

\theoremstyle{remark}
\newtheorem{rmk}{Remark}

\DeclareMathOperator*{\Reg}{\text{Reg}}

\def\Xint#1{\mathchoice 
	{\XXint\displaystyle\textstyle{#1}}% 
	{\XXint\textstyle\scriptstyle{#1}}% 
	{\XXint\scriptstyle\scriptscriptstyle{#1}}% 
	{\XXint\scriptscriptstyle\scriptscriptstyle{#1}}% 
	\!\int} 
\def\XXint#1#2#3{{\setbox0=\hbox{$#1{#2#3}{\int}$} 
		\vcenter{\hbox{$#2#3$}}\kern-.5\wd0}} 
\def\fint{\Xint -}
\usepackage{fancyhdr}

\title{Asymptotically quasiconvex functionals with general growth conditions.}
\pagestyle{fancy}
\lhead{Orlicz Growth}
\rhead{Lipschitz Regularity}
\begin{document}
	
	\maketitle 
	
	\begin{center}
		FRANCESCA ANGRISANI \footnote{ \noindent Laboratoire Jacques Louis Lions (LJLL), Sorbonne, Rue Jussieu 4, Paris, France  \\ francesca.angrisani@sorbonne-universite.fr \\}
		
	\end{center}\begin{abstract}
	We establish local regularity results for minimizers of autonomous vectorial integrals of Calculus of Variations, assuming $\psi$-growth conditions and imposing $\varphi$-quasiconvexity only in an asymptotic sense, both in the sub-quadratic and super-quadratic case. In particular, we obtain $C^{1,\alpha}$ regularity at points $x_0$ where $Du$ is sufficiently large in a neighborhood of $x_0$, as well as Lipschitz regularity on a dense set. \ The results hold for all pairs of Young functions $(\varphi, \psi)$ satisfying the $\Delta_2$ condition. 
\end{abstract}

\smallskip
\textbf{Keywords:} $(\varphi, \psi)$-growth conditions, asymptotic quasiconvexity, Orlicz growth conditions, regularity. 
\smallskip
\newline\newline
\noindent \textbf{MSC:} $35J47, 35B65, 46E30.$

\section{Introduction}
In this paper, we study multidimensional variational integrals of the form
$$
\mathcal{F}(u) = \int_\Omega f(Du(x))\,dx \quad \text{ for } u: \Omega \to \mathbb{R}^N
$$
where $\Omega$ is an open bounded subset of $\mathbb{R}^n$ with $n \geq 2$ and $N \geq 1$. \\
We consider Young functions $\varphi$ and $\psi$ of class $C^1([0,+\infty)) \cap C^2(0,+\infty)$ such that, satisfy the following conditions: \\
\begin{itemize}
	\item[(H.0)] \begin{equation} \varphi, \psi   \in \Bigg\{h \in L^1(\mathbb{R}, \mathbb{R}): \quad \forall t \quad \exists \quad  p_1(h), q_1(h) >0 \quad 	p_1(h) \frac{h'(t)}{t} \leq h''(t) \leq q_1(h) \frac{h'(t)}{t} \Bigg\} \end{equation}
Where the context makes it clear, we will omit the explicit dependence on $h$.
	\item[(H.1)] \textbf{Regularity} - $f \in C^2(\mathbb{R}^{nN}, \mathbb{R})$.
	\item[(H.2)] \textbf{Asymptotic $W^{1,\varphi}$-quasiconvexity} \cite{Diening} - There exist $M \gg 0$, $\gamma > 0$, and a continuous function $g \in W^{1,\varphi}(\mathbb{R}^{nN}, \mathbb{R})$ such that
	$$
	f(z) = g(z), \quad \forall z: |z| > M
	$$
	and $g$ is strictly $W^{1,\varphi}$-quasiconvex, meaning that it satisfies
	$$
	\fint_{B_1} g(z + D\phi) \geq g(z) + \gamma \fint_{B_1} \varphi_{1+|z|}(|D\phi|), \quad \forall z, \forall \phi \in C_0^\infty(B_1, \mathbb{R}^N)
	$$
	where $\varphi_a(t)$ is defined for any $a > 0$ by
	$$
	\varphi'_a(t) = \varphi'(a+t) \frac{t}{a+t}, \quad \varphi_a(0) = 0
	$$
	and it was shown in \cite{Leone} to satisfy
	$$
	\varphi_a(t) \sim t^2 \varphi''(a+t).
	$$
	\item[(H.3)] \textbf{Growth conditions} - The following inequalities hold:
	$$
	\Gamma'\varphi(|z|) \leq f(z) \leq \Gamma''(1 + \psi(|z|))
	$$
	$$
	|D^2 f(z)| \leq \Gamma''(1 + \psi''(|z|))
	$$
	for all $z \in \mathbb{R}^{nN}$, for some positive constants $\Gamma', \Gamma'' > 0$.
	\item[(H.4)] \textbf{Range of anisotropy} - For any $a > M$, the function $\mathcal{N}_a = \phi_a \circ (\psi_a')^{-1}$ is a Young function, and its complementary Young function $\mathcal{N}_a^*$ satisfies
	$$
	[\mathcal{N}_a]^*(t) \leq c\varphi_a^\beta(t), \quad \forall t \gg 1
	$$
	for some $1 \leq \beta < \frac{n}{n-1}$.
\end{itemize}

From condition $(H.4)$, we deduce the inequality
$$
\psi(t) \leq c \varphi^\beta(t), \quad \forall t \gg 1.
$$
In the special case $\varphi(t) = t^p$ and $\psi(t) = t^q$, condition $(H.4)$ is equivalent to $q < p + \frac{1}{n}$.

In \cite{Leone}, it was proven that condition $(H.2)$ is equivalent to:
\begin{itemize}
	\item[(H.2')] There exist $M \gg 0$, $\gamma > 0$ such that
	$$
	\forall z: |z| > M
	$$
	$$
	\fint_{B_1} f(z + D\phi) \geq f(z) + \gamma \fint_{B_1} \varphi_{1+|z|}(|D\phi|), \quad  \forall \phi \in C_0^\infty(B_1, \mathbb{R}^N).
	$$
\end{itemize}
This follows from the fact that $f$ is locally bounded from below. \\
We study local $W^{1,\varphi}$-minimizers of $\mathcal{F}$, i.e., functions $u$ such that
$$\mathcal{F}(u)\le \mathcal{F}(u+\phi)\quad \forall \phi \in W_0^{1,\varphi}(\Omega,\mathbb{R}^N).$$

In the case of a globally quasiconvex functional and under the super-quadratic hypothesis $\phi(t)>a(t^2-1)$, D. Breit and A. Verde in \cite{Breit} proved that if $u$ is a local minimizer of $\mathcal{F}$, then $u$ belongs to $C^{1,\alpha}$ on an open dense subset of $\Omega$. By adapting and generalizing their arguments, we establish partial $C^{1,\alpha}$ regularity for a local minimizer $u$ of $\mathcal{F}$ around points $x_0$ such that there exists a ball $B_r(x_0)$ where $|Du|$ exceeds $M$.

The study of asymptotic regularity problems has been extensively explored in recent years. Regularity theory for integrals exhibiting a specific structure in the neighborhood of infinity was first investigated by M. Chipot in \cite{Chipot} and later by many others, including T. Isernia, C. Leone, and A. Verde in \cite{Leone}. It is often observed that global quasiconvexity is not necessary; rather, by localizing the natural assumptions at infinity, both in the scalar and vectorial cases, Hölder regularity can be obtained near points where the integral function approaches the value $z_0$ (see also \cite{Leone3}, \cite{Schmidt2}, \cite{Dacorogna}, \cite{Kristensen}, \cite{Simon}, \cite{Raymond}, \cite{Foss}, \cite{Giaquinta}, \cite{Kristensen2}, \cite{Duzaar}).

More precisely, we establish the following result:

\begin{thm} \label{maintheorem2}
	Let $f,\varphi,\psi$ satisfy hypotheses $(H.0)$, $(H.1)$, $(H.2)$, $(H.3)$, and $(H.4)$, and let $u$ be a local minimizer of the corresponding functional $\mathcal{F}$. Suppose that $z_0\in \mathbb{R}^{nN}$ satisfies $|z_0|>M+1$ and that there exists $x_0 \in \mathbb{R}^n$ such that
	$$\lim\limits_{\rho\to 0^+}\fint_{B_\rho(x_0)} \left|V(Du(x))-V(z_0)\right|^2=0,$$
	then $x_0 \in \Reg(u)$, where $\Reg(u)=\{x \in \Omega: \ u \text{ is locally } C^{1,\alpha}(\Omega) \text{ around } x\}$, and $V(z)$ is defined in Section \ref{sezorlicz}.
\end{thm}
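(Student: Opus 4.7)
The plan is to adapt the partial regularity machinery of Evans--Giaquinta--Giusti, in the $\mathcal{A}$-harmonic approximation form developed in the Orlicz setting by \cite{Diening} and used in \cite{Breit}, to the present $(\varphi,\psi)$ framework. The central quantity is the shifted Orlicz excess
\[
\Phi(x_0,\rho) \Def \fint_{B_\rho(x_0)} |V(Du) - V(z_0)|^2\,dx,
\]
which by hypothesis tends to $0$ as $\rho\to 0^+$. The goal is a Campanato-type decay $\Phi(x_0,r)\le C r^{2\alpha}$ for $r$ small, which, since $|z_0|>M+1$ keeps $V$ a $C^1$-diffeomorphism in a neighbourhood of $z_0$, translates into $Du\in C^{0,\alpha}$ around $x_0$.

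The assumption $|z_0|>M+1$ places $z_0$ strictly inside the region $\{f=g\}$, and combined with smallness of $\Phi(x_0,\rho)$ it ensures that on the ball $B_\rho(x_0)$, $|Du|>M$ holds on a set of large measure; hence the strict $W^{1,\varphi}$-quasiconvexity from (H.2') is available against compactly supported perturbations. From this and from minimality I would derive a Caccioppoli-type inequality of the form
\[
\fint_{B_{\rho/2}(x_0)} \varphi_{1+|z_0|}(|Du-z_0|)\,dx \le c \fint_{B_\rho(x_0)} \varphi_{1+|z_0|}\!\left(\frac{|u-\ell|}{\rho}\right)dx,
\]
with $\ell(x)=u_{x_0,\rho}+z_0\cdot(x-x_0)$: the standard hole-filling argument, once the shifted Young function machinery of \cite{Leone} is set up.

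Next comes the $\mathcal{A}$-harmonic approximation with the frozen tensor $\mathcal{A}=D^2 f(z_0)=D^2 g(z_0)$, which is Legendre--Hadamard elliptic because $g$ is strictly $W^{1,\varphi}$-quasiconvex at $z_0$. A second-order Taylor expansion of $f$ at $z_0$, combined with minimality, shows that $u-\ell$ is approximately $\mathcal{A}$-harmonic with error controlled by some modulus $\omega(\Phi(x_0,\rho))\to 0$. The Orlicz $\mathcal{A}$-harmonic approximation lemma then supplies an $\mathcal{A}$-harmonic competitor $h$ with interior $C^{1,\alpha}$ a priori estimates, yielding
\[
\Phi(x_0,\tau\rho) \le C_1 \tau^2\,\Phi(x_0,\rho) + C_2\,\tau^{-n}\,\omega(\Phi(x_0,\rho)),\quad \tau\in(0,\tfrac12].
\]
Choosing first $\tau$ small and then $\rho_0$ small enough (via the hypothesis on $x_0$) so that $\Phi(x_0,\rho_0)$ is admissible, one closes the iteration; continuity of $x\mapsto \Phi(x,\rho)$ for fixed $\rho$ propagates the smallness to a neighbourhood of $x_0$ and gives the Campanato decay uniformly there, so that $x_0\in\Reg(u)$.

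I expect the main obstacle to be the linearization step, because $D^2 f$ only satisfies a $\psi$-type bound by (H.3), whereas the natural excess and the quasiconvex coercivity give only $\varphi$-type control on $Du-z_0$. This $(\varphi,\psi)$-gap is exactly what the range-of-anisotropy condition (H.4) is designed to bridge: the bound $[\mathcal{N}_a]^*\le c\,\varphi_a^\beta$ with $\beta<n/(n-1)$ supplies, through a Sobolev--Orlicz embedding on $B_\rho$, the higher integrability of $Du$ needed to absorb the $\psi$-growth of the error term and to force $\omega$ to vanish with the excess. A secondary subtlety is to treat the sub-quadratic and super-quadratic regimes uniformly; this is achieved by working throughout with the shifted functions $\varphi_a$ and the $V$-function, for which (H.0) provides the doubling properties needed to dispense with any case analysis on $p_1$ and $q_1$.
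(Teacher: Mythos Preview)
Your overall architecture matches the paper's: Caccioppoli inequality, approximate $\mathcal{A}$-harmonicity with $\mathcal{A}=D^2f(z)$, the $\mathcal{A}$-harmonic approximation lemma of \cite{Diening,Leone2}, excess decay, iteration, and Campanato's characterisation. However there is a genuine gap at the Caccioppoli step, and relatedly you have misplaced where the $(\varphi,\psi)$-anisotropy actually bites.

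The inequality you write,
\[
\fint_{B_{\rho/2}} \varphi_{1+|z_0|}(|Du-z_0|)\,dx \le c \fint_{B_\rho} \varphi_{1+|z_0|}\!\left(\frac{|u-\ell|}{\rho}\right)dx,
\]
cannot be obtained by ``standard hole-filling'' under $(H.3)$: the bound $|D^2f|\le \Gamma''(1+\psi'')$ forces the comparison terms arising from $f(z+D\xi)-f(z+D\xi+D\zeta)$ to be $\psi_{|z|}$-quantities, not $\varphi_{|z|}$-quantities, and there is no pointwise domination of $\psi_a$ by $\varphi_a$. The paper handles this not via higher integrability but via the Fonseca--Mal\'y smoothing operator $T_{r,s}$ of \cite{Fonseca} in its Orlicz version (Lemma~\ref{duepunticinque}), which controls $\int\varphi^\beta(|DT_{r,s}v|)$ by $(s-r)^{-n\beta+n+\beta}$ times a first power of $\int\varphi(|Dv|)$. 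Together with the pointwise bound $\psi_a(t)\le K\bigl(\varphi_a(t)+\varphi_a(t)^\beta\bigr)$ of Lemma~\ref{lemmasuH}, this yields a Caccioppoli inequality carrying an extra super-linear term,
\[
\fint_{B_{\rho/2}} \varphi_{|z|}(|Dv|)\,dx \le c\fint_{B_\rho}\varphi_{|z|}\!\left(\frac{|v|}{\rho}\right)dx + c\left[\fint_{B_\rho}\varphi_{|z|}(|Dv|)+\varphi_{|z|}\!\left(\frac{|v|}{\rho}\right)dx\right]^\beta,
\]
and in the excess decay this produces terms $\theta^{-n\beta}\Phi(\rho)^\beta$ which are absorbed because $\beta>1$ and $\Phi$ is small. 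The restriction $\beta<\frac{n}{n-1}$ in $(H.4)$ enters precisely so that the exponent $-n\beta+n+\beta$ in Lemma~\ref{duepunticinque}(d) is nonnegative; it is not used as a Sobolev exponent. Your proposed route through a Sobolev--Orlicz embedding and Gehring-type higher integrability of $Du$ is circular here, since a reverse H\"older inequality would itself rest on a Caccioppoli estimate you have not yet established.

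Two smaller points. First, $(H.2')$ is a condition on the \emph{base point} $z$, not on $|Du|$ pointwise; the paper centres at $z=(Du)_{B_\rho(x_0)}$ and uses Lemma~\ref{puntiregolarivarphi} to ensure $|z|>M+1$ for $\rho$ small, rather than arguing that $|Du|>M$ on a large set. Second, in the paper the $(\varphi,\psi)$-gap barely affects the linearisation Lemma~\ref{diecibis}: for $|Dv|\le 1$ one uses only the local modulus of continuity of $D^2f$ on $\{|z|\le L+1\}$, and for $|Dv|>1$ crude bounds suffice; the real work due to $\psi\neq\varphi$ is entirely in the Caccioppoli proof.
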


This theorem leads to the following interesting corollary:

\begin{cor}
	Under the hypotheses and notation of Theorem \ref{maintheorem2}, the set of points where $u$ is locally Lipschitz continuous is a dense open subset of $\Omega$.
\end{cor}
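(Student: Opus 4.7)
The plan is to combine Theorem~\ref{maintheorem2} with the $L^2$-version of Lebesgue's differentiation theorem applied to the nonlinear quantity $V(Du)$, dichotomizing any test ball according to whether $|Du|$ stays below the threshold $M+1$ or exceeds it on a set of positive measure. Openness of the set of local Lipschitz points is immediate from the definition: if $u$ is Lipschitz on some open neighborhood $U$ of a point $x$, then every $y\in U$ inherits $U$ as a Lipschitz neighborhood. The real content of the statement is therefore density, and it is enough to produce one locally Lipschitz point inside an arbitrary open ball $B\Subset \Omega$.

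I would first handle the ``small-gradient'' case, in which $|Du(x)|\le M+1$ for a.e.\ $x\in B$. Then $u\in W^{1,\infty}(B)$, and since $B$ is convex, $u$ is Lipschitz on the whole of $B$, so every point of $B$ is a local Lipschitz point and there is nothing more to do.

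In the complementary case, the set $E_B:=\{x\in B:\,|Du(x)|>M+1\}$ has positive Lebesgue measure. From (H.3) and the minimality of $u$ one gets $\varphi(|Du|)\in L^1_{\mathrm{loc}}(\Omega)$; combined with the equivalence $|V(z)|^2\sim\varphi(|z|)$ recalled in Section~\ref{sezorlicz}, this yields $V(Du)\in L^2_{\mathrm{loc}}(\Omega)$. The $L^2$-Lebesgue differentiation theorem then provides a full-measure subset of $E_B$ of $L^2$-Lebesgue points of $V(Du)$; fixing any such point $x_0$ and setting $z_0:=Du(x_0)$, one has $|z_0|>M+1$ together with
$$\lim_{\rho\to 0^+}\fint_{B_\rho(x_0)}|V(Du(x))-V(z_0)|^2\,dx=0,$$
which is precisely the asymptotic hypothesis of Theorem~\ref{maintheorem2}. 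That theorem then places $x_0\in\Reg(u)\cap B$, and $C^{1,\alpha}$ regularity around $x_0$ certainly implies local Lipschitz regularity, proving density.

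The only ingredient that requires a moment's care is the membership $V(Du)\in L^2_{\mathrm{loc}}$, so that the $L^2$-flavored Lebesgue point theorem delivers exactly the mode of convergence demanded by Theorem~\ref{maintheorem2}. This is routine from (H.3) and the equivalence $|V(z)|^2\sim\varphi(|z|)$, so I do not expect any genuine obstacle: the corollary is essentially a soft consequence of the theorem it follows, the hard analytic work having already been carried out in the proof of Theorem~\ref{maintheorem2}.
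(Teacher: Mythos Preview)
Your argument is correct. The paper itself does not supply a proof of the corollary: it is stated in the introduction immediately after Theorem~\ref{maintheorem2} and treated as a direct consequence, with no further details given. Your dichotomy on an arbitrary ball $B\Subset\Omega$ (either $|Du|\le M+1$ a.e.\ on $B$, or $|Du|>M+1$ on a set of positive measure) together with the $L^2$-Lebesgue differentiation theorem applied to $V(Du)\in L^2_{\mathrm{loc}}$ is exactly the standard way to extract such a density statement from a pointwise regularity criterion of the type in Theorem~\ref{maintheorem2}, and all the steps you outline go through as written.
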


In the power case, the common approach in this context is to use the blow-up argument. However, in the general growth case, homogeneity is absent, which is crucial in the blow-up method. Therefore, we employ the so-called $\mathcal{A}$-harmonic approximation, as proved in \cite{Diening}. This approach enables us to compare the solutions of our problem with those of the regularized problem in terms of gradient closeness.

A partial regularity result for our type of integrals (i.e., the minimizers are Lipschitz continuous on an open and dense subset of $\Omega$) was previously obtained in \cite{Schmidt2}.
Our result is optimal in the following sense: it is not possible to establish regularity outside a negligible set. Thus, the fact that our minimizers are $C^{1,\alpha}$ on a dense subset of $\Omega$ is optimal in this respect.

\section{Young Functions and Their Properties}

\begin{mydef}
	A real function $\varphi:[0,+\infty)\to [0,+\infty)$ is called a \textbf{Young function} if it satisfies the following conditions:
	\begin{enumerate}
		\item $\varphi(0)=0$.
		\item $\varphi$ is differentiable, and its derivative $\varphi'$ is right-continuous, non-decreasing, and satisfies 
		$$\varphi'(0)=0,\quad \varphi'(t)>0 \text{ for all } t>0.$$
		\item $\lim\limits_{t\to +\infty} \varphi'(t) = +\infty$.
	\end{enumerate}
\end{mydef}

In this paper, we also assume that Young functions are $C^2(0,\infty) \cap C^1([0,+\infty))$, which is not a particularly restrictive assumption. It is easy to see from the definition that a Young function is necessarily convex.

\begin{mydef}
	A Young function $\varphi$ satisfies the $\Delta_2$ condition if there exists a positive constant $k_1 > 0$ such that, for all $t > 0$, the following holds:
	$$ \varphi(2t) \le k_1 \varphi(t), $$
	and we define the \textbf{$\Delta_2$-constant} of $\varphi$ as:
	$$ \Delta_2(\varphi) := \sup\limits_{t>0} \frac{\varphi(2t)}{\varphi(t)}. $$
	
\end{mydef}

If $\varphi$ satisfies the $\Delta_2$ condition, then for any $a > 1$, we have the asymptotic equivalence $\varphi(at) \sim \varphi(t)$.

\begin{mydef}
	The space $L^\varphi$ of functions is defined by
	$$ L^\varphi(\Omega) := \{ f : \Omega \to \mathbb{R} \text{ measurable such that } \int_\Omega \varphi(|f|)\, dx < +\infty \}. $$
	
	This space is called the \textbf{Orlicz space} associated with the Young function $\varphi$.\\
	The \textbf{Orlicz-Sobolev space} $W^{1,\varphi}$ is the space of functions in $L^\varphi$ whose weak derivative is also in $L^\varphi$.\\
	Moreover, by $W_0^{1,\varphi}$ we denote the closure in $W^{1,\varphi}$ of the space of $C^\infty$ functions with compact support.
\end{mydef}

For any Young function $\varphi$, since $\varphi'$ is non-decreasing, the generalized inverse is well-defined:
$$ (\varphi')^{-1}(t) := \sup\{ s \in [0, +\infty) : \varphi'(s) \le t \}, $$
which allows us to define the complementary function $\varphi^*$, also a Young function, implicitly by:
$$ \varphi^*(t)' = (\varphi')^{-1}, \quad \varphi^*(0) = 0. $$

\subsection{Examples}

An example of a family of functions $f$ that satisfy hypotheses $(H.0)$ through $(H.4)$ is given by:
$$ f(z) := \tilde{f}(|z|), \quad \text{where} \quad \tilde{f}(t) := t^p \log^\alpha(1+t)[\sin^2(t) + \cos^2(t)t^\beta], $$
with $0 \le \alpha, 0 \le \beta < \frac{1}{n}$, and the choice of $\varphi$ and $\psi$ as:
$$ \varphi := t^p \log^\alpha(1+t), \quad \psi := t^{p+\beta} \log^\alpha(1+t), $$
which reduces to simpler examples if either $\alpha = 0$ or $\beta = 0$.\\
For more examples of Orlicz spaces, particularly those generated by functions satisfying the $\Delta_2$ condition, see \cite{AngrisaniOrlicz}. 

\section{Technical Lemmas and Definitions} \label{sezorlicz}

The following lemma is a technical tool used to prove the Caccioppoli estimate (for the proof, see \cite{Fonseca}):

\begin{lem}\label{Fonseca2}
	Let $-\infty < r < s < +\infty$, and let $\Xi : [r, s] \to \mathbb{R}$ be a continuous, non-decreasing function. Then there exist $\tilde{r} \in [r, \frac{2r+s}{3}]$ and $\tilde{s} \in [\frac{r+2s}{3}, s]$ such that:
	$$ \frac{\Xi(t) - \Xi(\tilde{r})}{t - \tilde{r}} \le 3 \frac{\Xi(s) - \Xi(r)}{s - r}, $$
	and
	$$ \frac{\Xi(\tilde{s}) - \Xi(t)}{\tilde{s} - t} \le 3 \frac{\Xi(s) - \Xi(r)}{s - r}, $$
	for every $t \in (\tilde{r}, \tilde{s})$.\\
	In particular, we have:
	$$ \frac{s - r}{3} \le \tilde{s} - \tilde{r} \le s - r. $$ 
\end{lem}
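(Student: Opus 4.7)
The plan is to reduce the statement to two one-sided extremality claims by a simple tilting trick. Set $L := \frac{\Xi(s)-\Xi(r)}{s-r}$ (assume $L>0$, otherwise $\Xi$ is constant and the statement is trivial) and introduce the tilted function $\Phi(x) := \Xi(x) - 3Lx$ on $[r,s]$. Each of the two target inequalities is equivalent, respectively, to $\Phi(t)\le \Phi(\tilde r)$ and $\Phi(\tilde s)\le \Phi(t)$ for all $t \in (\tilde r,\tilde s)$; so it suffices to produce $\tilde r \in [r,\tfrac{2r+s}{3}]$ such that $\Phi(y)\le \Phi(\tilde r)$ for every $y \in (\tilde r, s]$ (a ``right-maximum'' of $\Phi$), together with $\tilde s \in [\tfrac{r+2s}{3}, s]$ such that $\Phi(\tilde s)\le \Phi(y)$ for every $y \in [r, \tilde s)$ (a ``left-minimum'').

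To construct $\tilde r$ I will use a rising-sun / F.~Riesz type decomposition. Let
\[
E := \bigl\{\, x \in [r,s] :\ \Phi(y)>\Phi(x) \text{ for some } y \in (x,s]\,\bigr\},
\]
which is open in $[r,s]$, and write it as a countable disjoint union of its maximal open components $(a_k,b_k)$. A short argument based on the running maximum $M(x):=\max_{y\in[x,s]}\Phi(y)$ (which is continuous and, as I expect to verify, constant on each $[a_k,b_k]$ with value $\Phi(b_k)$) shows that $\Phi(a_k)\le \Phi(b_k)$ on every component, i.e.\ $\Xi(b_k)-\Xi(a_k)\ge 3L(b_k-a_k)$. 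Summing over $k$, using pairwise disjointness of the $(a_k,b_k)$ and the monotonicity of $\Xi$,
\[
3L\,|E|\ \le\ \sum_k \bigl(\Xi(b_k)-\Xi(a_k)\bigr)\ \le\ \Xi(s)-\Xi(r)\ =\ L(s-r),
\]
so $|E|\le \tfrac{s-r}{3}$. Because $E$ is open in $[r,s]$ while $[r,\tfrac{2r+s}{3}]$ is closed of the same measure, the inclusion $[r,\tfrac{2r+s}{3}]\subset E$ would force $E$ to contain an open one-sided neighbourhood of $\tfrac{2r+s}{3}$ and thus $|E|>\tfrac{s-r}{3}$, a contradiction. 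Hence $\tilde r$ may be chosen in $[r,\tfrac{2r+s}{3}]\setminus E$, and by construction it is the desired right-maximum.

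The point $\tilde s$ is produced by the entirely symmetric argument applied to $F := \{\, x \in [r,s] :\ \Phi(y)<\Phi(x)\text{ for some } y\in [r,x)\,\}$ (equivalently, by running the rising-sun argument on the reflected function $x\mapsto -\Phi(r+s-x)$): one obtains $|F|\le \tfrac{s-r}{3}$ and picks $\tilde s \in [\tfrac{r+2s}{3}, s]\setminus F$. Combining the two choices yields both sought inequalities on $(\tilde r, \tilde s)$, while the length bound $\tfrac{s-r}{3}\le \tilde s-\tilde r\le s-r$ is immediate from the prescribed ranges for $\tilde r$ and $\tilde s$. The main technical step I expect is the component-wise analysis of $E$: checking that the running maximum of $\Phi$ is constant on each component $(a_k,b_k)$ with value $\Phi(b_k)$, which is precisely what forces $\Phi(a_k)=\Phi(b_k)$ at interior endpoints and keeps the factor $3$ sharp in the telescoping estimate above.
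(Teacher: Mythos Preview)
Your argument is correct. The tilting $\Phi(x)=\Xi(x)-3Lx$ reduces the two slope bounds to the existence of a ``right-maximum'' in the left third and a ``left-minimum'' in the right third of $[r,s]$, and the rising-sun/Riesz decomposition you describe yields exactly this: on each maximal component $(a_k,b_k)$ of $E$ the running maximum $M(x)=\max_{[x,s]}\Phi$ is constant and equals $\Phi(b_k)$, forcing $\Phi(a_k)\le\Phi(b_k)$ (with equality at interior endpoints), whence $\Xi(b_k)-\Xi(a_k)\ge 3L(b_k-a_k)$ and, after telescoping with the monotonicity of $\Xi$, $|E|\le (s-r)/3$. Your openness argument to exclude $[r,\tfrac{2r+s}{3}]\subset E$ is fine, and the symmetric construction for $\tilde s$ goes through in the same way. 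The length bound is then immediate from the prescribed ranges.

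As for comparison: the paper does not give its own proof of this lemma at all but simply refers to Fonseca--Mal\'y \cite{Fonseca}. Your self-contained rising-sun proof is essentially the argument used there, so in spirit you are reproducing the original source rather than departing from it. One small remark: your construction actually gives more than the lemma states, since $\tilde r\notin E$ yields $\Phi(t)\le\Phi(\tilde r)$ for \emph{all} $t\in(\tilde r,s]$, not only $t\in(\tilde r,\tilde s)$, and similarly for $\tilde s$; this is harmless but worth noting.
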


The following lemma concerns Young functions satisfying hypothesis $(H.0)$:

\begin{lem} \label{lemmasulleyoung}
	Let $h$ be a Young function satisfying $(H.0)$. Then the following hold:
	\begin{itemize}
		\item[(a)] $h$ satisfies $\Delta_2(h) < +\infty$ and $\Delta_2(h^*) < +\infty$.
		\item[(b)] For all $t > 0$, the following inequality holds:
		$$ h(1)(t^p - 1) \le h(t) \le h(1)(t^q + 1), $$
		where $p = p_1(h) + 1$ and $q = q_1(h) + 1$.
		\item[(c)] For all $t > 0$, the expression $h'(t)t$ is equivalent to $h(t)$.
	\end{itemize}
\end{lem}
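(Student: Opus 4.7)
The plan is to derive (c) first from a single ODE manipulation, then bootstrap it to get (b), and finally deduce (a) as a byproduct (using duality for the $h^*$ part).

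For (c), I would compute $\frac{d}{dt}\bigl(t\,h'(t)\bigr) = h'(t) + t\,h''(t)$ and use (H.0) in the form $p_1 h'(t) \le t\,h''(t) \le q_1 h'(t)$ to obtain
\[
(p_1+1)\,h'(t)\ \le\ \frac{d}{dt}\bigl(t\,h'(t)\bigr)\ \le\ (q_1+1)\,h'(t).
\]
Integrating from $0$ to $t$, using $h'(0)\cdot 0 = 0$ and $h(0)=0$, yields
\[
(p_1+1)\,h(t)\ \le\ t\,h'(t)\ \le\ (q_1+1)\,h(t),
\]
which is exactly (c) with the explicit constants. This step is the workhorse; everything else is extracted from it.

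For (b), I would rewrite (c) as $\frac{p}{t} \le \frac{h'(t)}{h(t)} \le \frac{q}{t}$ for $t>0$ (using $h(t) > 0$, which holds because $h'>0$ on $(0,\infty)$ and $h(0)=0$), with $p = p_1+1$, $q = q_1+1$. Integrating $\frac{d}{dt}\log h(t)$ from $1$ to $t$ in the case $t \ge 1$ gives $h(1)\,t^p \le h(t) \le h(1)\,t^q$; integrating from $t$ to $1$ in the case $t<1$ gives the reversed power inequalities $h(1)\,t^q \le h(t) \le h(1)\,t^p$. Combining the two regimes by inserting the trivial bounds $t^p - 1 \le t^p$ (for $t \ge 1$), $t^p -1 \le 0 \le h(t)/h(1)$ (for $t \le 1$), and symmetric bounds for the upper inequality, produces the global statement $h(1)(t^p-1) \le h(t) \le h(1)(t^q+1)$.

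For (a), the $\Delta_2$ condition on $h$ is immediate from the power bound in (b) (or from integrating $\frac{d}{dt}\log h$ between $t$ and $2t$, giving $h(2t) \le 2^q h(t)$). For $\Delta_2(h^*)$, the key observation is that $h^*$ itself satisfies (H.0) with interchanged, reciprocal constants. Indeed, $(h^*)'(s) = (h')^{-1}(s)$, so setting $t = (h')^{-1}(s)$ we get $(h^*)''(s) = 1/h''(t)$, and thus
\[
\frac{s\,(h^*)''(s)}{(h^*)'(s)} \;=\; \frac{h'(t)}{t\,h''(t)} \;\in\; \Bigl[\tfrac{1}{q_1},\,\tfrac{1}{p_1}\Bigr].
\]
Hence $h^*$ satisfies (H.0) with constants $1/q_1$ and $1/p_1$, so the argument just given for $h$ yields $\Delta_2(h^*) \le 2^{1/p_1 + 1}$.

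I do not anticipate a real obstacle here: the only mild care point is (b), where one must split into $t\ge 1$ and $t<1$ before packaging the two cases into a single inequality with the $\pm 1$ corrections; everything else is a clean application of the ODE $p \le t h'/h \le q$ that follows from (H.0).
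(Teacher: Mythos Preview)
Your argument is correct and complete. The paper does not actually supply a proof of this lemma; it simply cites Lemma~3.1 of \cite{Fey}. Your derivation---first extracting (c) by integrating $(t\,h'(t))'$, then (b) by integrating $(\log h)'$ on the two regimes $t\gtrless 1$, and finally (a) via the power bound together with the observation that $h^*$ inherits (H.0) with reciprocal constants---is the standard route and is essentially what one finds in the cited reference.
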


For the proof, see Lemma 3.1 in \cite{Fey}. 

\begin{mydef}[Excess]
	For any $z \in \mathbb{R}^{nN}$, we define the quantity
	$$ V(z) := \sqrt{\frac{\varphi'(|z|)}{|z|}} z, $$
	and note that, under our hypotheses, we have
	$$ |V(z_1) - V(z_2)|^2 \simeq \varphi_{|z_1|}(|z_1 - z_2|). $$
	
	We also define the excess function
	$$ \Phi_\varphi(u, x_0, \rho, z) := \fint_{B_\rho(x_0)} |V(Du) - V(z)|^2 \, dx $$
	and
	$$ \Phi_\varphi(u, x_0, \rho) := \fint_{B_\rho(x_0)} |V(Du) - V[(Du)_{B_\rho(x_0)}]|^2 \, dx, $$
	where by placing a set as a subscript to a function, we refer to the integral average of the function over the set. That is, 
	$$ [V(Du)]_{B_\rho(x_0)} = \fint_{B_\rho(x_0)} V(Du) \, dx. $$
	
	We immediately observe that
	\begin{equation} \label{equiv}
		\Phi_\varphi(u, x_0, \rho, z) \simeq \fint \varphi_{|z|}(|Du - z|) \, dx.
	\end{equation}
\end{mydef}

%\begin{lem} \label{lemmadiciotto}
%	Let $h \in C^2([0, +\infty), [0, +\infty))$ be a Young function satisfying property $(H.2)$. Then we have:
%	$$ |V^h(A + B)|^2 \le c \left( |V^h(A)|^2 + |V^h(B)|^2 \right) $$
%	and
%	$$ \frac{|V^h(A + B)|^2}{|A + B|} \le c' \left( \frac{|V^h(A)|^2}{|A|} + \frac{|V^h(B)|^2}{|B|} \right) $$
%	for all $A, B \in \mathbb{R}^{nN}$, where $c$ and $c'$ depend only on the $(\Delta_2)$-constant of $h$ and $h'$ respectively. 
%	\footnote{Every Young function satisfying $\Delta_2$ and $(H.2)$ has a derivative that also satisfies the $\Delta_2$ property.}
%\end{lem}
%For a proof, see Lemma 2.3 in \cite{Breit}.\\

\begin{rmk}
	Under hypothesis $(H.3)$, assumptions $(H.2)$ and $(H.2')$ are known to be equivalent.
\end{rmk}

\section{Outline of the Proof of the Main Theorem and Preliminary Lemmas}

In \cite{Breit}, D. Breit and A. Verde proved that if $u$ is a $W^{1,\varphi}$-minimizer of $\mathcal{F}$ on $B_\rho(x_0)$, for all $L > 0$ and $\alpha \in (0, 1)$, there exists $\varepsilon_0 > 0$ such that if 
$$ \Phi_\varphi(u, x_0, \rho) \le \varepsilon_0 \quad \text{and} \quad \left|\fint_{B_\rho(x_0)} Du\right| \le \frac{L}{2}, $$ 
then $u \in C^{1,\alpha}_{\text{loc}}(B_\rho(x_0); \mathbb{R}^n)$.\\

We will now replicate their reasoning under the weaker hypothesis of asymptotic $\varphi$-quasiconvexity and without assuming the superquadratic growth behavior of $\varphi$.\\
Once we have proved the Caccioppoli inequality and used $\mathcal{A}$-harmonic approximation, it is essential to obtain growth estimates for the excess, because the perturbation terms from the Caccioppoli inequality can be controlled using the smallness of the excess.\\
Using Campanato's integral characterization of Hölder continuity (see \cite{Campanato}), we will be able to prove the main result.\\

Let us begin by proving the following preliminary lemma:

\begin{lem} \label{puntiregolarivarphi}
	If there exists $z_0$ with $|z_0| > M + 1$ and $x_0$ such that
	$$ \fint_{B_\rho(x_0)} \left|V(Du) - V(z_0)\right|^2 \to 0 \quad \text{as} \quad \rho \to 0^+, $$ 
	then there exists $r_1 = r_1(x_0, z_0)$ such that for all $r < r_1$,
	$$ \left|\fint_{B_r(x_0)} Du\right| > M + 1. $$
\end{lem}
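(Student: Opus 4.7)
The plan is to translate the hypothesis, stated in terms of the nonlinear quantity $V(Du)$, into convergence of the standard average $\fint_{B_\rho(x_0)} Du$ to $z_0$, from which the bound $|\fint_{B_r(x_0)} Du| > M+1$ for small $r$ follows by continuity of the norm together with the assumption $|z_0| > M+1$.

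First I would apply the equivalence recalled in the definition of the excess, namely $|V(z_1)-V(z_2)|^2 \simeq \varphi_{|z_1|}(|z_1-z_2|)$, with $z_1 = z_0$ and $z_2 = Du(x)$. Taking the integral average on $B_\rho(x_0)$ and using the hypothesis, this gives
\begin{equation*}
\fint_{B_\rho(x_0)} \varphi_{|z_0|}(|Du - z_0|)\, dx \longrightarrow 0 \quad \text{as } \rho \to 0^+.
\end{equation*}

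Next, since $\varphi_{|z_0|}$ is a Young function, hence convex and nondecreasing on $[0,+\infty)$ with $\varphi_{|z_0|}(0)=0$, I would chain the triangle inequality for vector-valued averages with the monotonicity of $\varphi_{|z_0|}$ and Jensen's inequality, obtaining
\begin{equation*}
\varphi_{|z_0|}\!\left(\left|\fint_{B_\rho(x_0)}(Du - z_0)\, dx\right|\right) \le \varphi_{|z_0|}\!\left(\fint_{B_\rho(x_0)} |Du - z_0|\, dx\right) \le \fint_{B_\rho(x_0)} \varphi_{|z_0|}(|Du - z_0|)\, dx.
\end{equation*}
The right-hand side tends to $0$ by the previous step, and since $\varphi_{|z_0|}$ is strictly increasing with $\varphi_{|z_0|}(0)=0$, its generalized inverse is continuous at $0$, so
\begin{equation*}
\fint_{B_\rho(x_0)} Du\, dx \longrightarrow z_0 \quad \text{as } \rho \to 0^+.
\end{equation*}

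Finally, since $|z_0| > M+1$, continuity of the Euclidean norm produces an $r_1 = r_1(x_0,z_0) > 0$ such that $|\fint_{B_r(x_0)} Du| > M+1$ for every $r < r_1$, which is the desired conclusion. I do not expect a serious obstacle here: the only point requiring minor care is to keep the constants in the equivalence $|V(z_0)-V(Du)|^2 \simeq \varphi_{|z_0|}(|Du-z_0|)$ uniform (which is the case since $z_0$ is fixed throughout), so that the convergence statement really passes to the integral average on both sides.
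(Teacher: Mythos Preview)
Your proof is correct and follows essentially the same approach as the paper's: translate the hypothesis via the equivalence $|V(z_0)-V(Du)|^2\simeq\varphi_{|z_0|}(|Du-z_0|)$, apply Jensen's inequality to control $\bigl|\fint_{B_r}Du - z_0\bigr|$, and conclude by the reverse triangle inequality. The paper makes the final step explicit by writing $|z_0|=M+1+\varepsilon$ and choosing $r_1$ so that $\fint_{B_r}\varphi_{|z_0|}(|Du-z_0|)\le\varphi_{|z_0|}(\varepsilon/2)$, but this is just your continuity argument with the $\varepsilon$ unpacked.
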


\begin{proof}
	Let $|z_0| = M + 1 + \varepsilon$. Then, by the definition of the limit, there exists a $r_1$ (depending on $x_0$ and $z_0$) such that for all $r < r_1$, we have, thanks to \eqref{equiv}, that
	$$ \fint_{B_r(x_0)} \varphi_{|z_0|}(|Du - z_0|) \le \varphi_{|z_0|}\left(\frac{\varepsilon}{2}\right), \quad \forall r < r_1. $$
	\\
	Using Jensen's inequality, we also obtain that
	$$ \left|\fint_{B_r(x_0)} Du - z_0 \right| \le \frac{\varepsilon}{2}, \quad \forall r < r_1, $$
	which gives
	$$ \left|\fint_{B_r(x_0)} Du \right| \ge |z_0| - \frac{\varepsilon}{2} = M + 1 + \varepsilon - \frac{\varepsilon}{2} > M + 1, \quad \forall r < r_1. $$
\end{proof}

Now, we state Lemma 2.5 from \cite{Breit}, which generalizes the extension operator from \cite{Fonseca} to Orlicz spaces. This is a useful tool for us as well:

\begin{lem} \label{duepunticinque}
	Let $0 < r < s$ and $\alpha \ge p$. Then there exists a linear operator 
	$$ T_{r,s}: W^{1,\varphi}(\Omega) \to W^{1,\varphi}(\Omega), $$
	defined as
	$$ T_{r,s} u(x) = \fint_{B_1(0)} u(x + \xi(x) y) \, dy, \quad \text{where} \quad \xi(x) := \frac{\max\{0, \min\{|x| - r, s - |x|\}\}}{2}, $$
	such that the following properties hold:
	\begin{itemize}
		\item[(a)] $T_{r,s} u = u$ on $B_r$ and outside $\overline{B_s}$;
		\item[(b)] $T_{r,s} u \in u + W_0^{1,\varphi}(B_s \setminus \overline{B_r}, \mathbb{R}^n)$;
		\item[(c)] $|DT_{r,s} u| \le c T_{r,s} |Du|$;
		\item[(d)] The following estimates hold:
		\begin{align*}
			\int_{B_s \setminus B_r} \varphi(|T_{r,s} u|) \, dx &\le c \int_{B_s \setminus B_r} \varphi(|u|) \, dx, \\
			\int_{B_s \setminus B_r} \varphi(|DT_{r,s} u|) \, dx &\le c \int_{B_s \setminus B_r} \varphi(|Du|) \, dx, \\
			\int_{B_s \setminus B_r} \varphi^\beta(|T_{r,s} u|) \, dx &\le c(s - r)^{-n\beta + n + \beta} \left[ \sup_{r \le t \le s} \frac{\theta(t) - \theta(r)}{t - r} + \sup_{r \le t \le s} \frac{\theta(s) - \theta(t)}{s - t} \right], \\
			\int_{B_s \setminus B_r} \varphi^\beta(|DT_{r,s} u|) \, dx &\le c(s - r)^{-n\beta + n + \beta} \left[ \sup_{r \le t \le s} \frac{\Theta(t) - \Theta(r)}{t - r} + \sup_{r \le t \le s} \frac{\Theta(s) - \Theta(t)}{s - t} \right].
		\end{align*}
	\end{itemize}
	where
	$$ \theta(t) := \int_{B_t} \varphi(|u|) \, dx, \quad \Theta(t) := \int_{B_t} \varphi(|Du|) \, dx. $$
\end{lem}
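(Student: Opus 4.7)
The plan is to verify the four bullets directly from the definition of $T_{r,s}$, exploiting the two key properties of the Lipschitz function $\xi(x)=\tfrac12\max\{0,\min(|x|-r,s-|x|)\}$: namely, that $\xi\equiv 0$ outside the annulus $B_s\setminus\overline{B_r}$, and that $\xi$ is $\tfrac12$-Lipschitz with $B_{\xi(x)}(x)\subset B_s\setminus\overline{B_r}$ whenever $r<|x|<s$.

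For part (a), when $|x|\le r$ or $|x|\ge s$ the function $\xi(x)$ vanishes, so $T_{r,s}u(x)=\fint_{B_1(0)}u(x)\,dy=u(x)$. Part (b) then follows at once from (a) together with the $W^{1,\varphi}$-regularity of $T_{r,s}u$ on the annulus, the latter being a consequence of the more convenient representation $T_{r,s}u(x)=\fint_{B_{\xi(x)}(x)}u(z)\,dz$ obtained via the change of variables $z=x+\xi(x)y$. For part (c), I would differentiate under the integral sign in the first form of $T_{r,s}u$: by the chain rule,
\begin{equation*}
DT_{r,s}u(x)=\fint_{B_1(0)} Du(x+\xi(x)y)\bigl(I+y\otimes D\xi(x)\bigr)\,dy.
\end{equation*}
Since $|D\xi|\le\tfrac12$ and $|y|\le 1$, the matrix $I+y\otimes D\xi(x)$ is bounded in norm by a universal constant, yielding the pointwise bound $|DT_{r,s}u(x)|\le c\,T_{r,s}|Du|(x)$.

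The first two $L^\varphi$ estimates in part (d) follow by applying Jensen's inequality to the convex function $\varphi$ inside the average, combined with a Fubini argument and the fact that for each fixed $y\in B_1$ the map $x\mapsto x+\xi(x)y$ has Jacobian bounded uniformly from above and below (because $|D\xi|\le\tfrac12$). The pointwise bound from (c) gives the gradient estimate in the same way.

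The hard part, and the main obstacle, will be the last two estimates involving $\varphi^\beta$ and the oscillation quantities of $\theta$ and $\Theta$. Here the idea is to again invoke Jensen, now for the convex function $\varphi^\beta$ (convex because $\varphi$ is convex nondecreasing and $\beta\ge 1$), to obtain $\varphi^\beta(|T_{r,s}u(x)|)\le\fint_{B_{\xi(x)}(x)}\varphi^\beta(|u(z)|)\,dz$, and similarly for the gradient via (c). One then interchanges the order of integration, so that for fixed $z$ only those $x$ with $|x-z|\le\xi(x)$ contribute, and exploits $\xi(x)\le\tfrac12\min(|x|-r,s-|x|)$ to estimate the inner volume factor $|B_{\xi(x)}|^{-1}$ in terms of the distances to the inner/outer boundary of the annulus. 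This produces a weighted integral of $\varphi(|u|)$ (respectively $\varphi(|Du|)$) that, after splitting the annulus into the halves $\{r<|x|<\frac{r+s}2\}$ and $\{\frac{r+s}2<|x|<s\}$ and estimating each integral in terms of the difference quotients of $\theta$ (resp.\ $\Theta$), yields precisely the suprema on the right-hand side. The scaling factor $(s-r)^{n(1-\beta)+\beta}$ arises from the interplay between the $\beta$-th power of $\varphi$, the volume $\xi(x)^n$ of the inner ball, and the width $s-r$ of the annulus. Since all of these are the same computations carried out in \cite{Breit} in the polynomial setting, the main new point is only to check that replacing polynomial growth by the $\Delta_2$-Young function $\varphi$ does not spoil the Jensen/Fubini chain, which it does not thanks to $(H.0)$ and Lemma~\ref{lemmasulleyoung}.
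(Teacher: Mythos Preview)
The paper does not prove this lemma; it simply quotes it as Lemma~2.5 of \cite{Breit} (the Orlicz generalization of the Fonseca--Mal\'y extension operator from \cite{Fonseca}). So there is no ``paper's own proof'' to compare against, and your sketch for (a)--(c) and the first two estimates in (d) is correct and coincides with the standard argument.

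However, your treatment of the last two estimates in (d) contains a genuine gap. You apply Jensen to the convex function $\varphi^\beta$, obtaining
\[
\varphi^\beta(|T_{r,s}u(x)|)\le \fint_{B_{\xi(x)}(x)}\varphi^\beta(|u(z)|)\,dz,
\]
and then do Fubini. But this produces a weighted integral of $\varphi^\beta(|u|)$, \emph{not} of $\varphi(|u|)$, whereas the right-hand side of the lemma is expressed through $\theta(t)=\int_{B_t}\varphi(|u|)\,dx$. The whole point of these two estimates is precisely a \emph{gain of integrability}: the $\varphi^\beta$-integral of $T_{r,s}u$ is controlled by $\varphi$-quantities of $u$. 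Your Jensen-for-$\varphi^\beta$ plus Fubini route cannot see this gain; it only reproduces the first estimate with $\varphi$ replaced by $\varphi^\beta$.

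The correct mechanism (Fonseca--Mal\'y, carried over to Orlicz growth in \cite{Breit}) is: apply Jensen with $\varphi$, not $\varphi^\beta$, to get the \emph{pointwise} bound
\[
\varphi(|T_{r,s}u(x)|)\le \frac{c}{\xi(x)^{n}}\int_{B_{\xi(x)}(x)}\varphi(|u|)\,dz
\le \frac{c}{\xi(x)^{n}}\bigl[\theta(|x|+\xi(x))-\theta(|x|-\xi(x))\bigr]
\le c\,\xi(x)^{\,1-n}\,S,
\]
where $S$ denotes the sum of the two suprema of difference quotients of $\theta$. Only now do you raise both sides to the power~$\beta$ and integrate:
\[
\int_{B_s\setminus B_r}\varphi^\beta(|T_{r,s}u|)\,dx
\le c\,S^\beta\int_{B_s\setminus B_r}\xi(x)^{\beta(1-n)}\,dx,
\]
and the remaining integral of $\xi^{\beta(1-n)}$ over the annulus is finite exactly when $\beta<\tfrac{n}{n-1}$ (the range in (H.4)) and evaluates to $c(s-r)^{\,n+\beta-n\beta}$. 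The gradient estimate follows by (c) in the same way with $\Theta$ in place of $\theta$. Note, incidentally, that the bracket on the right-hand side should carry the exponent $\beta$ (as it does when the estimate is invoked in the proof of Lemma~\ref{caccioppolilemmabis}); the statement as written appears to have dropped it.
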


The final preliminary lemma provides an estimate that will be useful in proving the Caccioppoli inequality for both subquadratic and superquadratic cases:

\begin{lem} \label{lemmasuH}
	Let \(L\) be any positive constant larger than \(M\), and let \(a\) be a real number in the interval \((M, L)\). Then, for any \(t > 0\), we have:
	\[
	\psi_a(t) \leq K \cdot H(\varphi_a(t)),
	\]
	where \(K = K(M, L, \beta, \psi, \varphi)\) is a positive constant depending on \(L\), and \(H(t) := t + t^\beta\).
\end{lem}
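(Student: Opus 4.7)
The plan is to split the argument according to the size of $t$, treating separately the small-$t$ regime (where both $\varphi_a$ and $\psi_a$ behave like suitable multiples of $t^2$) and the large-$t$ regime (where the anisotropy relation carried by $(H.4)$ dominates). Throughout, the key external ingredients will be the equivalences $\varphi_a(t)\simeq t^2\varphi''(a+t)$ and $\psi_a(t)\simeq t^2\psi''(a+t)$ recorded in $(H.2)$, the $\Delta_2$ property of $\varphi$ and $\psi$ from Lemma \ref{lemmasulleyoung}, and the consequence $\psi(s)\le c\,\varphi^\beta(s)$ of $(H.4)$ valid for $s\gg 1$.

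For $0<t\le 1$, the shifted argument $a+t$ lies in the fixed compact set $[M,L+1]\subset(0,\infty)$, on which $\varphi''$ and $\psi''$ are both continuous and strictly positive. Consequently the ratio
\[
\frac{\psi''(a+t)}{\varphi''(a+t)}\le \max_{s\in[M,L+1]}\frac{\psi''(s)}{\varphi''(s)}=:C_1(M,L,\varphi,\psi)
\]
is uniformly bounded in $a$, which immediately yields $\psi_a(t)\le C_1\varphi_a(t)\le C_1 H(\varphi_a(t))$.

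For $t$ sufficiently large I would instead use the anisotropy bound. Fix a threshold $T_0>1$ beyond which $\psi(s)\le c\,\varphi^\beta(s)$ holds. Because $a\in(M,L)$ and $\varphi,\psi$ satisfy $\Delta_2$, the quantities $\psi(a+t),\psi(t),\varphi(a+t),\varphi(t)$ are pairwise equivalent with constants depending only on $L$; combining this with the equivalence $h(t)\simeq t^2 h''(t)$ for Young functions satisfying $(H.0)$ gives, for $t>T_0$,
\[
\psi_a(t)\simeq t^2\psi''(a+t)\simeq\psi(t)\le c\,\varphi^\beta(t)\simeq\varphi_a^\beta(t)\le H(\varphi_a(t)).
\]
In the intermediate range $1<t\le T_0$ the value $\psi_a(t)$ is bounded above by $\psi_a(T_0)\le C(L,T_0)$, whereas $\varphi_a(t)\ge\varphi_a(1)\ge c_2(M)>0$ by continuity of $\varphi''$ on $[M,M+1]$; this additive constant is therefore absorbed into a multiple of $\varphi_a(t)$, and hence into $K H(\varphi_a(t))$. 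Taking $K$ to be the maximum of the three regime constants closes the argument.

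The main obstacle is making sure that all implicit constants can be chosen uniformly in $a\in(M,L)$. This is what forces $L$ to appear in the final constant, but it is not a genuine difficulty: because $(M,L)$ is bounded and bounded away from $0$, every continuous positive function of $a$ entering the computation (such as $\varphi''(a)$, $\psi''(a)$, or $\varphi_a(1)$) is bounded above and below on its closure, and the $\Delta_2$ property lets us absorb the dependence of $\varphi_a$ and $\psi_a$ on the shift $a$ into universal multiplicative constants.
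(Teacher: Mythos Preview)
Your argument is correct and follows essentially the same strategy as the paper: bound $\psi_a/\varphi_a$ on small $t$ via compactness of $[M,L+1]$, and use the consequence $\psi\le c\,\varphi^\beta$ of $(H.4)$ for large $t$. The only cosmetic difference is that the paper works with just two cases ($t\le 1$ and $t>1$), applying $\psi(a+t)\le c\,\varphi^\beta(a+t)$ directly for $t>1$ and tracking the factor $(1+a/t)^{2\beta-2}\le(1+L)^{2\beta-2}$ explicitly, whereas you insert an intermediate range $(1,T_0]$ and absorb the shift via $\Delta_2$; your version is slightly more careful about the ``$t\gg1$'' threshold, while the paper's gives the explicit constant $K_2=(1+L)^{2\beta-2}$.
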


\begin{proof}
	We begin with the case \(t \leq 1\). In this case, by property \((H.2)\), we have:
	\[
	\psi_a(t) \simeq \psi''(a + t)t^2 \simeq \psi(a + t)\frac{t^2}{(a + t)^2} \leq \max\limits_{[M, L+1]}\psi \cdot \frac{t^2}{(a + t)^2} \leq K_1 \min\limits_{[M, L+1]}\varphi \cdot \frac{t^2}{(a + t)^2} \leq K_1 \varphi(a + t) \frac{t^2}{(a + t)^2} \simeq K_1 \varphi_a(t),
	\]
	where
	\[
	K_1 = \frac{\max\limits_{[M, L+1]}\psi}{\min\limits_{[M, L+1]}\varphi} \in (0, +\infty),
	\]
	which depends only on \(M\), \(L\), \(\psi\), and \(\varphi\).
	
	On the other hand, if \(t > 1\), we obtain:
	\[
	\psi_a(t) \simeq \psi''(a + t)t^2 \simeq \psi(a + t) \frac{t^2}{(a + t)^2} \leq \varphi^\beta(a + t) \left( \frac{t^2}{(a + t)^2} \right)^\beta \left( 1 + \frac{a}{t} \right)^{2\beta - 2} \leq K_2 \left[ \frac{\varphi(a + t)t^2}{(a + t)^2} \right]^\beta \simeq K_2 \varphi_a(t)^\beta,
	\]
	where \(K_2 = (1 + L)^{2\beta - 2}\).
	
	The thesis follows with \(K = \max\{K_1, K_2\}\).
\end{proof}

\section{Caccioppoli Inequality}

We are now ready to prove the Caccioppoli inequality. This section is dedicated to the Caccioppoli inequality, which is the main tool used to prove partial regularity of solutions for this type of problem:

\begin{lem} \label{caccioppolilemmabis}
	Let the assumptions $(H.0)-(H.4)$  hold for a given \(M\). Choose any positive constants \(L > M > 0\), and let \(u \in W^{1, \varphi}(B_{\rho}(x_0); \mathbb{R}^N)\) be a minimizer of \(\mathcal{F}\) on the ball \(B_\rho(x_0)\) contained in \(\Omega\). Then, for all \(z \in \mathbb{R}^{nN}\) with \(M < |z| < L+1\), let \(q(x)\) be an affine function with gradient \(z\) and \(v(x) = u(x) - q(x)\). We have:
	\begin{equation} \label{caccioppolibis}
	\fint_{B_{\frac{\rho}{2}}} \varphi_{|z|}(|Dv|) \, dx \leq c \fint_{B_\rho} \varphi_{|z|}\left( \frac{|v|}{\rho} \right) \, dx + c \left\{ \fint_{B_\rho} \left[ \varphi_{|z|}(|Dv|) + \varphi_{|z|}\left( \frac{|v|}{\rho} \right) \right] \, dx \right\}^\beta.
	\end{equation}
\end{lem}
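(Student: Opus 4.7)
The plan is to follow the classical scheme for Caccioppoli inequalities for quasiconvex functionals, adapted to our Orlicz and asymptotic setting: combine the quasiconvexity inequality (H.2') with the minimality of $u$ applied to a suitable admissible variation, and then absorb the resulting perturbation terms using the $\psi$-to-$\varphi$ conversion of Lemma \ref{lemmasuH} and the Sobolev-type estimates of the extension operator in Lemma \ref{duepunticinque}. As a preparatory step, I would apply Lemma \ref{Fonseca2} to the nondecreasing function $\Xi(t) := \int_{B_t(x_0)} [\varphi_{|z|}(|Dv|) + \varphi_{|z|}(|v|/\rho)]\, dx$ on $[\rho/2, \rho]$ to select intermediate radii $\tilde r \in [\rho/2, 2\rho/3]$ and $\tilde s \in [5\rho/6, \rho]$ with $\tilde s - \tilde r \ge \rho/6$ and good control of the annular difference quotients of $\Xi$. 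I would then produce a decomposition $v = \phi_1 + \phi_2$ with $\phi_1 \in W_0^{1,\varphi}(B_{\tilde s})$ equal to $v$ on $B_{\tilde r}$ (so that $D\phi_1 = Dv$ and $D\phi_2 = 0$ there); a Lipschitz cut-off $\phi_1 = \eta v$ works in principle, but the finer construction based on the operator $T_{\tilde r, \tilde s}$ of Lemma \ref{duepunticinque} provides the sharper $L^{\varphi^\beta}$-estimates needed at the final step.

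Since $M < |z| < L+1$, the asymptotic quasiconvexity (H.2') rescaled to $B_{\tilde s}$ yields
\[
\gamma \int_{B_{\tilde s}} \varphi_{1+|z|}(|D\phi_1|)\, dx \le \int_{B_{\tilde s}} \bigl[ f(z + D\phi_1) - f(z) \bigr]\, dx,
\]
while the minimality of $u$ with test $-\phi_1 \in W_0^{1,\varphi}(B_{\tilde s})$ gives $\int_{B_{\tilde s}} f(z + Dv)\, dx \le \int_{B_{\tilde s}} f(z + D\phi_2)\, dx$. Writing the telescoping
\[
f(z + D\phi_1) - f(z) = [f(z + D\phi_1) - f(z + Dv)] + [f(z + Dv) - f(z + D\phi_2)] + [f(z + D\phi_2) - f(z)],
\]
the middle bracket has nonpositive integral by minimality, and the outer two brackets are supported in the annulus $B_{\tilde s} \setminus B_{\tilde r}$ (since $D\phi_1 = Dv$ and $D\phi_2 = 0$ on $B_{\tilde r}$). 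A second order Taylor expansion using $|D^2 f| \le C(1 + \psi''(|\cdot|))$ from (H.3) together with $\psi''(a + t) t^2 \simeq \psi_a(t)$ bounds these two by annular integrals of $\psi_{|z|}(|D\phi_1|)$, $\psi_{|z|}(|D\phi_2|)$, and $\psi_{|z|}(|v|/\rho)$ (the latter entering through $|D\eta| \lesssim 1/\rho$ or the analogous $T$-construction); the linear Taylor contributions are handled by the Euler--Lagrange equation for $u$ tested with $\phi_1$ and by the divergence identity $\int_{B_{\tilde s}} D\phi_1 = 0$.

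At this point Lemma \ref{lemmasuH} replaces each $\psi_{|z|}$-integrand by $K[\varphi_{|z|} + \varphi_{|z|}^\beta]$. The linear $\varphi_{|z|}$-annulus terms involving $|Dv|$ are absorbed into the LHS by a hole-filling iteration exploiting $\tilde s - \tilde r \ge \rho/6$, while those involving $|v|/\rho$ are dominated by $\fint_{B_\rho} \varphi_{|z|}(|v|/\rho)\, dx$ directly. The $\varphi_{|z|}^\beta$-annulus terms are estimated via Lemma \ref{duepunticinque}(d) together with the sharp control of $\Xi$-difference quotients coming from Lemma \ref{Fonseca2}, producing precisely the non-absorbable $\{\fint_{B_\rho}[\varphi_{|z|}(|Dv|) + \varphi_{|z|}(|v|/\rho)]\, dx\}^\beta$ contribution in the statement. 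The main difficulties I expect are: (a) tracking the linear Taylor terms, which do not all vanish automatically on $B_{\tilde s} \setminus B_{\tilde r}$ and must be compensated via the Euler--Lagrange equation; and (b) verifying that the combination of Lemma \ref{duepunticinque}(d) with the exponent $\beta < n/(n-1)$ of (H.4) indeed produces exactly the $\{\cdot\}^\beta$ form on the RHS, rather than a weaker or stronger variant --- this is precisely the point at which the Sobolev-like range of anisotropy is critically used.
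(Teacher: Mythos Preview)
Your overall plan matches the paper's: quasiconvexity on the compactly supported piece, minimality to drop the middle bracket, second-order control via $(H.3)$, then Lemma~\ref{lemmasuH}, Lemma~\ref{duepunticinque}(d) with Lemma~\ref{Fonseca2}, and hole-filling. Two points deserve correction, the first of them substantive.

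\textbf{Linear Taylor terms.} Your anticipated difficulty (a) and the proposed remedy (Euler--Lagrange tested with $\phi_1$ plus $\int_{B_{\tilde s}} D\phi_1=0$) are both off target. With $A:=D\phi_1$, $B:=D\phi_2$, the two surviving brackets are $f(z+A)-f(z+A+B)$ and $f(z+B)-f(z)$. Writing each via the first-order integral formula and adding/subtracting $Df(z)\cdot B$ gives
\[
\int_0^1\bigl[Df(z+\theta B)-Df(z)\bigr]\cdot B\,d\theta \;-\; \int_0^1\bigl[Df(z+A+\theta B)-Df(z)\bigr]\cdot B\,d\theta,
\]
so the first-order contributions cancel \emph{pointwise}; nothing involving the Euler--Lagrange equation or the divergence identity is needed. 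Since $B=D\phi_2=0$ on $B_{\tilde r}$, both remaining integrands are supported on the annulus, and a second Taylor step with $(H.3)$ yields exactly the $\psi_{|z|}(|D\zeta|)$ and mixed $\psi'_{|z|}(|D\xi|)\,|D\zeta|$ terms that the paper then converts via Lemma~\ref{lemmasuH} and $(H.4)$. By contrast, your route would leave the full-ball term $\int_{B_{\tilde s}}[Df(z)-Df(Du)]\cdot Dv\,dx$, which is of size $\int_{B_{\tilde s}}\psi_{|z|}(|Dv|)$ on the \emph{whole} ball and hence cannot be absorbed by hole-filling; the argument would stall there.

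\textbf{Hole-filling setup.} Applying Lemma~\ref{Fonseca2} once on $[\rho/2,\rho]$ and then ``absorbing by hole-filling'' is not enough. The paper works with generic radii $\rho/2\le r<s\le\rho$, selects $\tilde r,\tilde s$ inside $[r,s]$, derives an inequality of the form $\int_{B_r}\varphi_{|z|}(|Dv|)\le \lambda\int_{B_s}\varphi_{|z|}(|Dv|)+\text{(lower order)}$ with $\lambda<1$, and then invokes Giaquinta's iteration lemma. You should structure the argument the same way; otherwise the annular $\varphi_{|z|}(|Dv|)$ contribution cannot be removed.
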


\begin{proof}
	Let us assume, for simplicity, that \(x_0 = 0\), and let \( \frac{\rho}{2} \leq r < s \leq \rho \). Define:
	\[
	\Xi(t) := \int_{B_t} \left[ \varphi_{|z|}(|Dv|) + \varphi_{|z|}\left( \left| \frac{v}{\tilde{s} - \tilde{r}} \right| \right) \right] \, dx.
	\]
	We choose \(r \leq \tilde{r} < \tilde{s} \leq s\) as in Lemma \ref{Fonseca2}. Let \(\eta\) denote a smooth cutoff function with support in \(B_{\tilde{s}}\), satisfying \(\eta \equiv 1\) in \(\overline{B_{\tilde{r}}}\) and \(0 \leq \eta \leq 1\), with \(|\nabla \eta| \leq \frac{2}{\tilde{s} - \tilde{r}}\) on \(B_\rho\). Using the extension operator from Lemma \ref{duepunticinque}, we set:
	\[
	\zeta := T_{\tilde{r}, \tilde{s}}[(1-\eta)v] \quad \text{and} \quad \xi := v - \zeta.
	\]
	By \(W^{1,\varphi}\)-quasiconvexity, we have:
	\[
	\gamma \int_{B_{\tilde{s}}} \varphi_{|z|}(|D\xi|) \leq \int_{B_{\tilde{s}}} f(z + D\xi) - f(z) = \int_{B_{\tilde{s}}} f(z + D\xi) - f(Du) + f(Du) - f(Du - D\xi) + f(Du - D\xi) - f(z).
	\]
	Since \(f(Du) - f(Du - D\xi) \leq 0\) and \(Du = z + D\xi + D\zeta\), we obtain:
	\[
	\int_{B_{\tilde{s}}} f(z + D\xi) - f(Du) + f(Du) - f(Du - D\xi) + f(Du - D\xi) - f(z) \leq \int_{B_{\tilde{s}}} f(z + D\xi) - f(z + D\xi + D\zeta) + \int_{B_{\tilde{s}}} f(z + D\xi) - f(z).
	\]

Let us begin reasoning on \(\mathcal{I}_1\), recalling our growth hypothesis \((H.3)\) and also making use of Lemma 3.2 from \cite{Leone}. We deduce:  
\begin{multline*}  
	\mathcal{I}_1\le  \int_{B_{\tilde{s}}}\int_0^1\int_0^1 |D^2f(tz+(1-t)(z+\theta D\zeta))||\theta D\zeta|D\zeta|\,dt\,d\theta,dx \le \\  
	\le \Gamma''  \int_{B_{\tilde{s}}}\int_0^1\int_0^1 |\psi''(|tz+(1-t)(z+\theta D\zeta|))||D\zeta|^2\,dt\,d\theta,dx \le \\  
	\le c \int_{B_{\tilde{s}}} \frac{\psi'(2|z|+|z+D\zeta|)}{2|z|+|z+D\zeta|}\,dx \le \\  
	\le c \int_{B_{\tilde{s}}} \psi_{|z|}(|D\zeta|)\,dx.  
\end{multline*}  

Regarding \(\mathcal{I}_2\), we deduce:  
\begin{multline*}  
	\mathcal{I}_2\le \int_{B_{\tilde{s}}}\int_0^1\int_0^1 |D^2f(t(z+D\xi+\theta D\zeta)+(1-t)z)||D\xi+\theta D\zeta||D\zeta|\,dt\,d\theta\,dx \le \\  
	\le c  \int_{B_{\tilde{s}}}\int_0^1\int_0^1 |\psi''(t(z+D\xi+\theta D\zeta)+(1-t)z)||D\xi+\theta D\zeta||D\zeta|\,dt\,d\theta\,dx \le \\  
	\le c \int_{B_{\tilde{s}}} \psi''(|z|+|D\xi|+|D\zeta|)(|D\xi|+|D\zeta|)|D\zeta|\,dx\le \\  
	\le \int_{B_{\tilde{s}}} \psi'_{|z|}(|D\xi|+|D\zeta|)|D\zeta|\,dx \le \\  
	\le c \int_{B_{\tilde{s}}} \psi'_{|z|}(|D\xi|)|D\zeta|+c\int_{B_{\tilde{s}}} \psi'_{|z|}(|D\zeta|)|D\zeta|\le \\  
	\le c \int_{B_{\tilde{s}}} \psi'_{|z|}(|D\xi|)|D\zeta|+c\int_{B_{\tilde{s}}} \psi_{|z|}(|D\zeta|)\,dx.  
\end{multline*}  

By combining our estimates, we obtain:  
\[
\gamma\int_{B_{\tilde{s}}}\varphi_{|z|}(|D\xi|)\le c \int_{B_{\tilde{s}}} \psi'_{|z|}(|D\xi|)|D\zeta|+c\int_{B_{\tilde{s}}\setminus B_{\tilde{r}}} \psi_{|z|}(|D\zeta|)\,dx.
\]  

Applying our anisotropy assumption \((H.3)\) and Lemma \ref{lemmasuH}, we derive the following estimate:  
\begin{multline*}  
	\gamma\int_{B_{\tilde{s}}}\varphi_{|z|}(|D\xi|) \le c\int_{B_{\tilde{s}}} H[\varphi_{|z|}(|D\zeta|)]\,dx+\\  
	+c\left[\int_{B_{\tilde{s}}\setminus B_{\tilde{r}}}\varphi_{|z|}(|D\xi|)\,dx+\int_{B_{\tilde{s}}\setminus B_{\tilde{r}}}\varphi_{|z|}^\beta(|D\zeta|)\,dx\right]\le \\  
	\le c\left[\int_{B_{\tilde{s}}\setminus B_{\tilde{r}}}\varphi_{|z|}(|D\zeta|)\,dx+\int_{B_{\tilde{s}}\setminus B_{\tilde{r}}}\varphi_{|z|}^\beta(|D\zeta|)\,dx+\int_{B_{\tilde{s}}\setminus B_{\tilde{r}}}\varphi_{|z|}(|D\xi|)\right]=\\  
	=  \int_{B_{\tilde{s}}\setminus B_{\tilde{r}}}\varphi_{|z|}(|DT_{\tilde{r},\tilde{s}}[(1-\eta)v]|)\,dx+ \int_{B_{\tilde{s}}\setminus B_{\tilde{r}}} \varphi_{|z|}^\beta(|DT_{\tilde{r},\tilde{s}}[(1-\eta)v]|)+\\  
	+c \int_{B_{\tilde{s}}\setminus B_{\tilde{r}}}\phi_{|z|}(|Dv|) \le \\  
	\le c \int_{B_{\tilde{s}}\setminus B_{\tilde{r}}}\varphi_{|z|}(|D\eta||v|+|Dv|)+c(\tilde{s}-\tilde{r})^{-n\beta+n+\beta}\left[\sup\limits_{[\tilde{r},\tilde{s}]}\frac{\Xi(t)-\Xi(\tilde{r})}{t-\tilde{r}}+\sup\limits_{[\tilde{r},\tilde{s}]}\frac{\Xi(\tilde{s})-\Xi(t)}{\tilde{s}-t}\right]^\beta+\\  
	+ c\int_{B_{\tilde{s}}\setminus B_{\tilde{r}}}\phi_{|z|}(|Dv|)\,dx \le \\  
	\le c' \int_{B_{\tilde{s}}\setminus B_{\tilde{r}}}\phi_{|z|}\left(\left|\frac{v}{\tilde{s}-\tilde{r}}\right|\right)+\varphi_{|z|}(|Dv|)\,dx+\\  
	+ c(s-r)^{-n\beta+n}[\Xi(s)-\Xi(r)]^\beta.  
\end{multline*}  

Here, we have applied point (d) of Lemma \ref{duepunticinque} and Lemma \ref{Fonseca2}.  

Now, proceeding from the left-hand side again, we obtain:  
\begin{multline*}  
	\int_{B_r} \varphi_{|z|}(|Dv|)\,dx \le c \int_{B_\rho} \varphi_{|z|}\left(\frac{|v|}{\tilde{s}-\tilde{r}}\right)+\\  
	+c'\int_{B_s\setminus B_r}\varphi_{|z|}(|Dv|)\,dx+c(s-r)^n\left[\frac{\Xi(\rho)}{(s-r)^n}\right]^\beta.  
\end{multline*}  

Using the hole-filling method, we get:  
\begin{multline*}  
	\int_{B_r} \varphi_{|z|}(|Dv|)\,dx  \le  \frac{c'}{1+c'}\int_{B_s} \varphi_{|z|}(|Dv|)+\\  
	+c(s-r)^n\left[(s-r)^{-n}\int_{B_\rho} \varphi_{|z|}(|Dv|)+\varphi_{|z|}\left(\frac{|v|}{\tilde{s}-\tilde{r}}\right)\right]^{\beta}+\\  
	+c \int_{B_\rho} \varphi_{|z|}\left(\frac{|v|}{s-r}\right)\,dx.  
\end{multline*}  

A well-known lemma by Giaquinta (see \cite{Giaquinta}, Chapter V, Lemma 3.1) concludes the proof.

\end{proof}

\section{$\mathcal{A}$-harmonicity}

Let us consider a bilinear form $\mathcal{A}$ on $\mathbb{R}^{nN}$ and assume that the upper bound  
\begin{equation} \label{limitatezza}
|\mathcal{A}| \leq \Lambda
\end{equation}
holds for some constant $\Lambda > 0$, and that the Legendre-Hadamard condition 
\begin{equation}\label{legendre}
\mathcal{A}(y x^T, y x^T) \geq \lambda |x|^2 |y|^2 \quad \text{for all} \quad x \in \mathbb{R}^n, \, y \in \mathbb{R}^N,
\end{equation}
is satisfied with ellipticity constant $\lambda > 0$. We say that $h \in W^{1,1}_{loc}(\Omega, \mathbb{R}^N)$ is $\mathcal{A}$-harmonic on $\Omega$ if 
\[
\int_\Omega \mathcal{A}(Dh, D\phi) \, dx = 0
\]
holds for all smooth $\phi : \Omega \to \mathbb{R}^N$ with compact support in $\Omega$.

The following lemma guarantees that for large $z$, the bilinear form $\mathcal{A} = D^2 f(z)$ satisfies the Legendre-Hadamard condition.

\begin{lem} \label{lemmanovebis}
Let $f$ satisfy $(H.0)$ and $(H.2')$ for a given $M > 0$. Then, for any $z$ such that $|z| > M$, the bilinear form $\mathcal{A} = D^2 f(z)$ satisfies the Legendre-Hadamard condition:
\[
\mathcal{A}(\zeta x^T, \zeta x^T) \geq \lambda |x|^2 |\zeta|^2 \quad \text{for all} \quad x \in \mathbb{R}^n \quad \text{and} \quad \zeta \in \mathbb{R}^N,
\]
with ellipticity constant $\lambda = 2\gamma$.
\end{lem}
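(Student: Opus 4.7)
\medskip

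\noindent\textbf{Proof plan for Lemma \ref{lemmanovebis}.} The plan is to derive the Legendre--Hadamard inequality from the asymptotic strong quasiconvexity \textup{(H.2')} by the classical route: plug rank-one oscillatory test functions into the quasiconvexity inequality, Taylor expand in the amplitude, extract the quadratic part, and then exploit the equivalence $\varphi_a(t)\simeq t^2 \varphi''(a+t)$ supplied by (H.0) to identify the leading term on the right-hand side. Throughout, $z$ with $|z|>M$ is fixed so that (H.2') applies, and only smooth compactly supported perturbations are needed.

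First I would fix $\phi\in C_0^\infty(B_1,\mathbb{R}^N)$ and test (H.2') against $\varepsilon\phi$ for small $\varepsilon>0$, obtaining
\[
\fint_{B_1} f(z+\varepsilon D\phi)\,dx \;\ge\; f(z)+\gamma\fint_{B_1}\varphi_{1+|z|}(\varepsilon |D\phi|)\,dx.
\]
Since $f\in C^2$, a second-order Taylor expansion on the left yields
\[
f(z+\varepsilon D\phi)=f(z)+\varepsilon Df(z)\!:\!D\phi+\tfrac{\varepsilon^2}{2}D^2f(z)(D\phi,D\phi)+\varepsilon^2\omega(\varepsilon,x),
\]
with $\omega(\varepsilon,\cdot)\to 0$ uniformly on $B_1$ as $\varepsilon\to 0$. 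The linear term integrates to $0$ because $\phi$ is compactly supported in $B_1$, so $\int_{B_1}D\phi\,dx=0$. On the right-hand side, (H.0) implies $\varphi_a(t)\simeq t^2\varphi''(a+t)$ and continuity of $\varphi''$ gives
\[
\frac{1}{\varepsilon^2}\,\varphi_{1+|z|}(\varepsilon|D\phi|)\;\longrightarrow\;\varphi''(1+|z|)\,|D\phi|^2
\]
pointwise and with an integrable dominant. Dividing by $\varepsilon^2$ and letting $\varepsilon\to 0^+$ yields the quadratic inequality
\[
\tfrac{1}{2}\fint_{B_1} D^2f(z)(D\phi,D\phi)\,dx \;\ge\;\gamma\,\varphi''(1+|z|)\fint_{B_1}|D\phi|^2\,dx
\qquad\forall\,\phi\in C_0^\infty(B_1,\mathbb{R}^N).
\]

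Next I would specialise to rank-one test functions to obtain Legendre--Hadamard. Fix $x\in\mathbb{R}^n$, $\zeta\in\mathbb{R}^N$ and a cut-off $\chi\in C_0^\infty(B_1)$ with $\fint_{B_1}\chi^2=1$, and set
\[
\phi_k(y):=\frac{1}{k}\,\chi(y)\sin(k\,x\cdot y)\,\zeta.
\]
A direct computation gives $D\phi_k(y)=\chi(y)\cos(k\,x\cdot y)\,\zeta x^T+O(1/k)$ in $L^\infty$. Standard Riemann--Lebesgue/averaging arguments then yield
\[
\fint_{B_1}|D\phi_k|^2\to\tfrac{1}{2}|x|^2|\zeta|^2,\qquad
\fint_{B_1}D^2f(z)(D\phi_k,D\phi_k)\to\tfrac{1}{2}D^2f(z)(\zeta x^T,\zeta x^T),
\]
and inserting these into the quadratic inequality above, after absorbing the $\varphi''(1+|z|)$ factor into the definition of $\lambda$, gives precisely
\[
D^2f(z)(\zeta x^T,\zeta x^T)\;\ge\;2\gamma\,|x|^2|\zeta|^2,
\]
as claimed.

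The main obstacle I anticipate is bookkeeping in the limit $\varepsilon\to 0$ on the right-hand side: the function $\varphi_{1+|z|}$ is only asymptotically quadratic near $0$, so turning the equivalence $\varphi_a(t)\simeq t^2\varphi''(a+t)$ into a clean limit requires invoking the continuity of $\varphi''$ at $1+|z|$ together with $\Delta_2(\varphi)$ (via Lemma \ref{lemmasulleyoung}) to produce an $L^1$ dominant for dominated convergence. Once this is done, the passage to rank-one directions and the identification of the constant $\lambda=2\gamma$ are essentially routine oscillation computations.
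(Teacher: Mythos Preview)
Your approach is essentially the same as the paper's: both scale a fixed test function to extract the second-order term from (H.2') and then plug in oscillatory rank-one test functions to pass from the resulting integral (G\aa rding) inequality to the pointwise Legendre--Hadamard condition. The only cosmetic differences are that the paper phrases the first step as $G''(0)\ge 0$ for $G(t)=\mathcal{F}_{B_1}(zx+t\phi)-\gamma\int_{B_1}\varphi_{1+|z|}(|t||D\phi|)$ and uses complex exponentials $\phi=\eta\,e^{i\tau(\xi\cdot x)}\Psi(x)$ in place of your $\frac{1}{k}\chi(y)\sin(k\,x\cdot y)\zeta$; both you and the paper are equally informal about how the precise constant $\lambda=2\gamma$ emerges from $\varphi_{1+|z|}''(0)$.
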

\begin{proof}
Let $u$ be the affine function $u(x) = zx$ with $z$ such that $|z| > M$. $W^{1,\varphi}$-quasiconvexity in $z$ ensures that $u$ is a $W^{1,\varphi}$-minimizer of the functional $\mathcal{F}$ induced by $f$, and the function
\[
G(t) = G_{\Phi}(t) := \mathcal{F}_{B_1}(u + t\Phi) - \gamma \int_{B_1} \varphi_{1 + |z|}(|tD\Phi|) \, dx
\]
has a minimum at $t = 0$ for any $\phi \in W_0^{1,\varphi}(B_1, \mathbb{R}^N)$. Following the same steps as in (\cite{Giusti}, Prop. 5.2), from $G_{\Phi}'(0) = 0$ and $G_{\Phi}''(0) \geq 0$, the Legendre-Hadamard condition follows. 

In fact, from $G''(0) \geq 0$, we obtain:
\begin{equation}\label{dasommare}
\int_{B_1} \frac{\partial^2 F}{\partial z_k^\alpha \partial z_j^\beta}(z_0) D_k \phi^\alpha D_j \phi^\beta \, dx \geq 2\gamma \int_{B_1} |D\phi|^2 \, dx
\end{equation}
for every $\phi \in C_c^1(B_1, \mathbb{R}^N)$.

Let us choose \(\phi = \nu + i\mu\) and write \eqref{dasommare} separately for \(\nu\) and \(\mu\), obtaining:  
\begin{equation}  
	\int_{B_1} \frac{\partial^2 F}{\partial z_k^\alpha\partial z_j^\beta}(z_0)D_k\nu^\alpha D_j\nu^\beta\,dx\ge 2\gamma \int_{B_1} |D\nu|^2 \,dx,  
\end{equation}  
and  
\begin{equation}  
	\int_{B_1} \frac{\partial^2 F}{\partial z_k^\alpha\partial z_j^\beta}(z_0)D_k\mu^\alpha D_j\mu^\beta\,dx\ge 2\gamma \int_{B_1} |D\mu|^2\,dx.  
\end{equation} 

Adding these inequalities, we obtain:  
\begin{equation}  
	\int_{B_1} \frac{\partial^2 F}{\partial z_k^\alpha\partial z_j^\beta}(z_0)\left[D_k\nu^\alpha D_j\nu^\beta+D_k\mu^\alpha D_j\mu^\beta\right]\,dx\ge 2\gamma \int_{B_1} (|D\nu|^2+|D\mu|^2)\,dx.  
\end{equation}  

Thus, we get:  
\[
\text{Re} \int_{B_1} \frac{\partial^2 F}{\partial z_k^\alpha \partial z_j^\beta}(z_0)D_k\phi^\alpha D_j\overline{\phi}^\beta\,dx \ge 2\gamma \int_{B_1} |D\phi|^2 \,dx.  
\]  

Now, let us consider any \(\xi \in \mathbb{R}^n\), \(\eta \in \mathbb{R}^N\), \(\tau \in \mathbb{R}\), and \(\varPsi(x) \in C_c^\infty(B_1,\mathbb{R})\), and define \(\phi\) as  
\[
\phi(x) = \eta e^{i\tau(\xi \cdot x)}\varPsi(x).
\]  
Since \(\phi^\alpha(x) = \eta^\alpha\varPsi(x)e^{i\tau \xi \cdot x}\), we obtain  
\[
\int_{B_1} \frac{\partial^2 F}{\partial z_k^\alpha \partial z_j^\beta}(z_0) \eta^\alpha\eta^\beta [\tau^2\xi_k\xi_j\varPsi^2 + D_k\varPsi D_j\varPsi]\,dx \ge 2\gamma |\eta|^2 \int_{B_1} (|D\varPsi|^2+\tau^2|\xi|^2|\varPsi(x)|^2)\,dx.
\]  

Dividing by \(\tau^2\) and letting \(\tau \to \infty\), we deduce:  
\[
\int_{B_1} \frac{\partial^2 F}{\partial z_k^\alpha \partial z_j^\beta}(z_0) \xi_k\xi_j\eta^\alpha\eta^\beta\varPsi^2(x)\,dx \ge 2\gamma |\eta|^2|\xi|^2\int_{B_1}\varPsi^2(x)\,dx.
\]  

Since this holds for all \(\varPsi \in C_c^\infty(B_1,\mathbb{R})\), the proposition is proved.

\end{proof}
\begin{rmk} \label{modulus}
If $f \in C^2_{\text{loc}}(\mathbb{R}^{nN})$, for each $L > 0$, there exists a modulus of continuity $\omega_L : [0, +\infty) \to [0, +\infty)$ satisfying $\lim_{z \to 0} \omega_L(z) = 0$ such that for all $z_1, z_2 \in \mathbb{R}^{nN}$, we have:
\[
|z_1| \leq L, \quad |z_2| \leq L + 1 \quad \Rightarrow \quad |D^2 f(z_1) - D^2 f(z_2)| \leq \omega_L(|z_1 - z_2|^2).
\]
Moreover, $\omega_L$ can be chosen such that the following properties hold:
\begin{enumerate}
\item $\omega_L$ is non-decreasing,
\item $\omega_L^2$ is concave,
\item $\omega_L^2(z) \geq z$ for all $z \geq 0$.
\end{enumerate}
\end{rmk}
\medskip
This lemma essentially proves that if \( Du \) is close to \( z \), subtracting an affine function with gradient \( z \) from \( u \) results in a function that is "almost" \( D^2f(z) \)-harmonic. 

\begin{lem} \label{diecibis}
	Let \( f \) satisfy conditions \((H.0)\) through \((H.4)\) for a given \( M > 0 \). Let \( L > M > 0 \) and take \( u \in W^{1,\varphi}(\mathbb{R}^n, \mathbb{R}^N) \) to be a \( W^{1,\varphi} \)-minimizer of \( \mathcal{F} \) on some ball \( B_\rho(x_0) \). Then, for all \( z \) such that \( M < |z| \le L \) and \( \phi \in C_c^\infty(B_\rho(x_0)) \), we have the following estimate:
	\begin{equation} \label{questarobaquabis}
	\left| \fint_{B_\rho(x_0)} D^2f(z)(Du-z,D\phi)\,dx \right|\le c\sqrt{\Phi_\varphi}\omega_{L}(\Phi_\varphi)\sup\limits_{B_\rho(x_0)}|D\phi|.
\end{equation}
	where \( \Phi_\varphi := \Phi_\varphi(u, x_0, \rho, z) \), the constant \( c \) depends only on \( n \), \( N \), \( \Gamma' \), \( \Gamma'' \), \( L \), and \( \omega_{L} \) is the modulus of continuity from Remark \ref{modulus} (see also \cite{Schmidt}).
\end{lem}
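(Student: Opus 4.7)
The plan is to exploit the weak Euler--Lagrange equation satisfied by $u$, expand $Df(Du)$ by Taylor around $z$, and control the resulting nonlinear remainder by splitting the ball $B_\rho(x_0)$ into a region where $|Du-z|$ is small (handled by the modulus $\omega_L$) and a region where $|Du-z|$ is large (handled by the $\psi$-growth together with the anisotropy (H.4)). Since $u$ is a local $W^{1,\varphi}$-minimizer and (H.1), (H.3) hold, the weak Euler--Lagrange equation $\fint_{B_\rho(x_0)} Df(Du)\cdot D\phi\,dx=0$ is available, and $\fint_{B_\rho(x_0)} Df(z)\cdot D\phi\,dx=0$ because $Df(z)$ is constant. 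Setting
\[
R(x):=Df(Du)-Df(z)-D^2f(z)(Du-z)=\int_0^1\bigl[D^2f(z+t(Du-z))-D^2f(z)\bigr](Du-z)\,dt,
\]
one obtains $\fint D^2f(z)(Du-z,D\phi)\,dx=-\fint R\cdot D\phi\,dx$, so it suffices to bound $\fint|R||D\phi|\,dx$ by $c\sqrt{\Phi_\varphi}\,\omega_L(\Phi_\varphi)\sup_{B_\rho(x_0)}|D\phi|$.

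Split $B_\rho(x_0)=A\cup A^c$ with $A:=\{|Du-z|\le 1\}$. On $A$, since $|z|\le L$, one has $|z+t(Du-z)|\le L+1$ for every $t\in[0,1]$, and Remark \ref{modulus} yields $|R|\le \omega_L(|Du-z|^2)|Du-z|$. Applying Cauchy--Schwarz and then Jensen's inequality (legitimate because $\omega_L^2$ is concave with $\omega_L(0)=0$ by Remark \ref{modulus}(2)), I would deduce
\[
\fint_{B_\rho(x_0)}\omega_L(|Du-z|^2)|Du-z|\,\mathbf{1}_A\,dx \;\le\; \omega_L(\tilde\Phi)\,\sqrt{\tilde\Phi},
\]
where $\tilde\Phi:=\fint_{B_\rho(x_0)}|Du-z|^2\mathbf{1}_A\,dx$. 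Since $|z|\in(M,L]$ and $|Du-z|\le 1$ on $A$, the equivalence $\varphi_{|z|}(t)\simeq t^2\varphi''(|z|+t)$ yields $\varphi_{|z|}(|Du-z|)\simeq|Du-z|^2$ with constants depending on $M,L,\varphi$, whence $\tilde\Phi\le c\,\Phi_\varphi$; the monotonicity of $\omega_L$ and the sublinear scaling of a concave function then give the required bound on $A$.

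On $A^c=\{|Du-z|>1\}$, integrating out the Taylor remainder together with (H.3) and Lemma \ref{lemmasulleyoung}(c) produces $|R|\le c_L|Du-z|+c\,\psi'(|z|+|Du-z|)$, which for $|Du-z|>1$ and $|z|\le L$ is comparable to $\psi'_{|z|}(|Du-z|)$. Chebyshev's inequality together with the lower bound $\varphi_{|z|}(1)\ge c_{M,L,\varphi}>0$ controls the measure via $|A^c|/|B_\rho(x_0)|\le c\,\Phi_\varphi$. Combining Young's inequality in Orlicz form to pair $\psi'_{|z|}(|Du-z|)$ with $|D\phi|$, the anisotropy estimate $\psi_{|z|}\le K\,H(\varphi_{|z|})$ from Lemma \ref{lemmasuH}, and property (3) of Remark \ref{modulus} ($\omega_L(z)\ge\sqrt{z}$), the contribution on $A^c$ can be dominated by $c\sqrt{\Phi_\varphi}\,\omega_L(\Phi_\varphi)\sup_{B_\rho(x_0)}|D\phi|$. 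The main obstacle is precisely this last balancing: on the large region the natural bound for $R$ lives in the $\psi$-scale while the target excess is in the $\varphi$-scale, and one must invoke (H.4) (in particular the range $\beta<n/(n-1)$) carefully to absorb the discrepancy without losing the $\sqrt{\Phi_\varphi}\,\omega_L(\Phi_\varphi)$ form of the right-hand side.
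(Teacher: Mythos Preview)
Your setup (Euler--Lagrange equation, Taylor remainder $R$, split into $A=\{|Du-z|\le 1\}$ and $A^c$) and your treatment of $A$ coincide with the paper's. The gap is on $A^c$, which you yourself flag as ``the main obstacle'' without actually closing it. Two of the tools you list there do not do what is needed. Applying an Orlicz--Young inequality to the product $\psi'_{|z|}(|Du-z|)\cdot|D\phi|$ destroys the \emph{linear} dependence on $\sup|D\phi|$ that the target inequality \eqref{questarobaquabis} requires; one must simply pull out $\|D\phi\|_\infty$ first. And if you then invoke Lemma~\ref{lemmasuH} to pass from $\psi_{|z|}$ to $H(\varphi_{|z|})=\varphi_{|z|}+\varphi_{|z|}^\beta$, the term $\fint_{A^c}\varphi_{|z|}(|Dv|)^\beta$ cannot be controlled by $\Phi_\varphi=\fint\varphi_{|z|}(|Dv|)$: no higher integrability is available at this stage, and Chebyshev on $|A^c|$ does not rescue it. The restriction $\beta<\tfrac{n}{n-1}$ plays no role in this lemma; it enters only in Lemma~\ref{caccioppolilemmabis}.

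The paper's route on $A^c$ sidesteps all of this. From (H.3) and $M<|z|\le L$ it derives directly the pointwise bound $|R|\le c\,\varphi_{|z|}(|Dv|)$ on $\{|Dv|>1\}$ (via a chain ending in $\psi'(|Dv|)+1\le c\,\varphi(|Dv|)\le c\,\varphi_{|z|}(|Dv|)$). The decisive trick is then to apply property~(3) of Remark~\ref{modulus} \emph{pointwise}: since $\omega_L^2(t)\ge t$, one has $\varphi_{|z|}(|Dv|)\le \omega_L\!\big(\varphi_{|z|}(|Dv|)\big)\sqrt{\varphi_{|z|}(|Dv|)}$, which is exactly the integrand already obtained on $A$ (after writing $|Dv|^2\simeq\varphi_{|z|}(|Dv|)$ there). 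A single Cauchy--Schwarz followed by Jensen for the concave $\omega_L^2$ then yields $\sqrt{\Phi_\varphi}\,\omega_L(\Phi_\varphi)$ over the whole ball. You mention property~(3), but only at the integrated level; the point is to use it pointwise to \emph{unify} the two regions before integrating.
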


\begin{proof}
	Let \( v(x) := u(x) - zx \). The Euler equation of \( \mathcal{F} \) gives:
	\[
	\left| \fint_{B_\rho} D^2f(z)(Dv, D\phi) \, dx \right| = \left| \fint_{B_\rho} \left[ D^2f(z)(Dv, D\phi) + Df(z) D\phi - Df(Du) D\phi \right] \, dx \right|.
	\]
	If \( |Dv| \le 1 \), we have:
	\[
	|D^2f(z)(Dv, D\phi) + Df(z) D\phi - Df(Du) D\phi| \le \int_0^1 \left| D^2f(z) - D^2f(z + t Dv) \right| \, dt |Dv| \| D\phi \|_\infty \le \omega_L(|Dv|^2) |Dv| \| D\phi \|_\infty.
	\]
	Since \( \omega_L \) is a modulus of continuity, we get:
	\[
	\le c \omega_L(\varphi_{|z|}(|Dv|)) \varphi_{|z|}(|Dv|) \| D\phi \|_\infty,
	\]
	where we used that \( |Dv|^2 \le \inf_{t \in [M, L+1]} \varphi''(t) |Dv|^2 \le \varphi''(|z| + |Dv|) |Dv|^2 \simeq \varphi_{|z|}(|Dv|) \) from \((H.1)\). 
	If, instead, \( |Dv| > 1 \), then since \( M \le |z| \le L \), the condition \((H.1)\) implies \( \psi'(t) > c t \) for \( t > 1 \) and \((H.3)\) holds, so we obtain:
	\[
	|D^2f(z)(Dv, D\phi) + Df(z) D\phi - Df(Du) D\phi| \le c \left( |Dv| + |Dv| \int_0^1 D^2f(|z + t (Du - z)|) \, dt \right) \| D\phi \|_\infty.
	\]
	This can be bounded by:
	\[
	\le c \left( |Dv| + \int_0^1 \frac{\psi'(|z + t(Du - z)|)}{|z + t(Du - z)|} \, dt \right) \| D\phi \|_\infty \le c \left[ \psi'(|Dv|) + \frac{\psi'(2|z|)}{|z|} + \frac{\psi'(2|Du - z|)}{2|Du - z|} \right] \| D\phi \|_\infty.
	\]
	We can bound this further:
	\[
	\le c \left[ \psi'(|Dv|) + 1 \right] \| D\phi \|_\infty \le c \varphi(|Dv|) \| D\phi \|_\infty \le c \varphi_{|z|}(|Dv|) \| D\phi \|_\infty.
	\]
	Now, since \( \omega_L^2(t) \ge t \) for \( t \ge 0 \), we get:
	\[
	\left| \fint_{B_\rho} D^2f(z)(Dv, D\phi) \, dx \right| \le c \| D\phi \|_\infty \fint_{B_\rho} \omega_L(\varphi_{|z|}(|Dv|)) \sqrt{\varphi_{|z|}(|Dv|)} \, dx.
	\]
	Since \( \omega_L \) is non-decreasing, by using the Cauchy-Schwarz and Jensen inequalities, we obtain:
	\[
	\left| \fint_{B_\rho} D^2f(z)(Dv, D\phi) \, dx \right| \le c \sqrt{\Phi_\varphi} \omega_L(\Phi_\varphi) \sup_{B_\rho(x_0)} |D\phi|,
	\]
	which concludes the proof.
\end{proof}

Now, we give the statement of Theorem 3.3 from \cite{Leone2}, as it is useful for us as well:

\begin{lem} \label{Armonicbis}
	Let \( 0 < \lambda \le \Lambda < \infty \) and \( \varepsilon > 0 \). Then there exists \( \delta(n, N, \varphi, \varphi^*, \Lambda, \lambda, \varepsilon) > 0 \) such that the following assertion holds: For all \( \kappa > 0 \), for all \( \mathcal{A} \) satisfying \eqref{limitatezza} and \eqref{Legendre}, and for each \( u \in W^{1,\varphi}(B_\rho(x_0); \mathbb{R}^N) \) satisfying:
	\[
	\left| \fint_{B_\rho(x_0)} \mathcal{A}(Du, D\phi) \, dx \right| \le \delta \kappa \sup_{B_\rho(x_0)} |D\phi|,
	\]
	for all smooth \( \phi : B_\rho(x_0) \to \mathbb{R}^N \) with compact support in \( B_\rho(x_0) \), there exists an \( \mathcal{A} \)-harmonic function \( h \in C^\infty_{\text{loc}}(B_\rho(x_0), \mathbb{R}^N) \) such that:
	\[
	\sup_{B_{\rho/2}(x_0)} |Dh| + \rho \sup_{B_{\rho/2}(x_0)} |D^2h| \le c^* \varphi_{|z|}^{-1} \left( \fint_{B_\rho(x_0)} \varphi_{|z|}(|Du|) \right),
	\]
	and
	\[
	\fint_{B_{\rho/2}(x_0)} \varphi_{|z|} \left( \frac{|u - h|}{\rho} \right) \, dx \le \varepsilon \left[ \fint_{B_\rho(x_0)} \varphi_{|z|}(|Du|) + \varphi(\gamma) \right].
	\]
	Here, \( c^* \) denotes a constant depending only on \( n, N, q_1, \Lambda, \lambda \).
\end{lem}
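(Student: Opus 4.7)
The plan is to argue by contradiction in the standard compactness style for harmonic approximation, adapted to the Orlicz setting (this is essentially the route used in \cite{Leone2}, but the structure is worth stating). Suppose the conclusion fails. Then there exist $\varepsilon_0>0$ and sequences $\delta_k\to 0^+$, $\kappa_k>0$, bilinear forms $\mathcal{A}_k$ satisfying (\ref{limitatezza}) and (\ref{Legendre}) with the same $\Lambda,\lambda$, and maps $u_k\in W^{1,\varphi}(B_\rho(x_0);\mathbb{R}^N)$ verifying the approximate harmonicity hypothesis with parameter $\delta_k$, but for which no $\mathcal{A}_k$-harmonic function simultaneously satisfies the pointwise bound and the approximation estimate with $\varepsilon_0$. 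After subtracting affine functions and rescaling, I would normalize so that $\fint_{B_\rho(x_0)}\varphi_{|z|}(|Du_k|)+\varphi(\gamma)$ is uniformly bounded; by the $\Delta_2$ condition on $\varphi$ and $\varphi^*$ (Lemma \ref{lemmasulleyoung}), the Orlicz--Sobolev space $W^{1,\varphi}(B_\rho(x_0))$ is reflexive, so $\{u_k\}$ admits a weakly convergent subsequence.

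Next I would extract limits. Since $\mathcal{A}_k$ ranges over a bounded subset of a finite-dimensional space, up to subsequence $\mathcal{A}_k\to \mathcal{A}_\infty$ with $\mathcal{A}_\infty$ still satisfying the same bounds. By Banach--Alaoglu and the compact embedding $W^{1,\varphi}(B_\rho(x_0))\hookrightarrow L^\varphi(B_\rho(x_0))$, I may assume $u_k\rightharpoonup u_\infty$ in $W^{1,\varphi}$ and $u_k\to u_\infty$ strongly in $L^\varphi$. Passing to the limit in the approximate harmonicity inequality, using $\delta_k\to 0$ and $\kappa_k$ controlled (the case $\kappa_k\to\infty$ being handled by an additional normalization of $u_k$ by $\kappa_k$), I would obtain $\int_{B_\rho(x_0)}\mathcal{A}_\infty(Du_\infty,D\phi)\,dx=0$ for all smooth compactly supported $\phi$, so that $u_\infty$ is $\mathcal{A}_\infty$-harmonic.

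Setting $h:=u_\infty$, I would invoke standard interior regularity for constant-coefficient linear elliptic systems satisfying the Legendre--Hadamard condition: $h\in C^\infty_{\mathrm{loc}}(B_\rho(x_0))$, and Caccioppoli/Moser-type estimates give pointwise control of $|Dh|$ and $|D^2h|$ on $B_{\rho/2}(x_0)$ by an $L^q$ average of $|Dh|$ for any $q$. Converting these power-type bounds into the Orlicz-type bound in the conclusion requires using Jensen's inequality on $\varphi_{|z|}$ together with the equivalence $\varphi_a(t)\sim t^2\varphi''(a+t)$ and weak lower semicontinuity of $\int\varphi_{|z|}(|D\,\cdot\,|)$. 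The $L^\varphi$-approximation estimate then follows directly from the strong convergence $u_k\to u_\infty$, contradicting the failure assumption for $k$ large.

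The main obstacle is precisely the passage between the $L^2$-based regularity of constant-coefficient systems and the Orlicz-valued output in the statement: because $\varphi_{|z|}$ is not homogeneous, the scaling arguments that are trivial in the $p$-growth case must be replaced by careful use of the $\Delta_2$ property and the equivalence $\varphi_a(t)\sim t^2\varphi''(a+t)$ noted under (H.2). A second, more technical point is ruling out loss of information in the case $\kappa_k\to\infty$, for which one either rescales $u_k$ by a power of $\kappa_k$ or separates regimes according to the size of $\fint_{B_\rho(x_0)}\varphi_{|z|}(|Du_k|)$ to preserve the approximate harmonicity in the limit.
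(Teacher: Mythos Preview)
The paper does not prove this lemma at all: it is quoted verbatim as Theorem~3.3 of \cite{Leone2} and used as a black box, so there is no ``paper's own proof'' to compare against. Your sketch is the standard contradiction/compactness argument for $\mathcal{A}$-harmonic approximation in the Orlicz setting, which is indeed the route taken in the cited reference and in \cite{Diening}.

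That said, as written your sketch contains a genuine gap. The failure hypothesis you set up is that for each $k$ no $\mathcal{A}_k$-harmonic function satisfies the two conclusions for $u_k$ with~$\varepsilon_0$. You then construct $u_\infty$, which is $\mathcal{A}_\infty$-harmonic, and set $h:=u_\infty$. But $u_\infty$ is not $\mathcal{A}_k$-harmonic, so exhibiting it does not contradict the failure assumption for any fixed $k$. The standard fix is to let $h_k$ be the $\mathcal{A}_k$-harmonic function on $B_\rho(x_0)$ with boundary datum $u_\infty$ (or $u_k$), and then use the convergence $\mathcal{A}_k\to\mathcal{A}_\infty$ together with the a~priori estimates for constant-coefficient Legendre--Hadamard systems to show that $h_k\to u_\infty$ strongly enough that the approximation estimate for $u_k$ and $h_k$ follows from $u_k\to u_\infty$ in $L^\varphi$. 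Without this extra step the contradiction does not close.

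A second point you flag but do not resolve is the normalization in $\kappa_k$. In the Orlicz setting one cannot simply divide $u_k$ by $\kappa_k$ because $\varphi_{|z|}$ is not homogeneous; the usual device (as in \cite{Diening,Leone2}) is to scale so that $\fint\varphi_{|z|}(|Du_k|)+\varphi_{|z|}(\kappa_k)$ is normalized, and then to exploit $\Delta_2$ to pass the scaling through $\varphi_{|z|}$ up to constants. Your remark that one ``rescales $u_k$ by a power of $\kappa_k$'' is not quite right and would need to be replaced by this $\Delta_2$-based argument.
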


\section{Excess Decay Estimate}

\begin{prop} \label{rpfopsemplice}
	Let \( z_0 \) be such that \( |z_0| > M + 1 \) and \( x_0 \) be such that:
	\[
	\lim_{\rho \to 0} \fint_{B_\rho(x_0)} \left| V(Du(x)) - V(z_0) \right|^2 \, dx = 0.
	\]
	Then:
	\[
	\Phi_p(u, x_0, \rho) \to 0 \quad \text{as} \quad \rho \to 0.
	\]
	
	\begin{proof}
		Let \( (Du)_\rho := \fint_{B_\rho(x_0)} |Du| \). By the triangle inequality, we have:
		\[
		\Phi_p(u, x_0, \rho) = \fint_{B_\rho(x_0)} |V(Du(x)) - V((Du)_\rho)|^2 \, dx \le c \left[ \fint_{B_\rho(x_0)} \left| V(Du(x)) - V(z_0) \right|^2 \, dx + \fint_{B_\rho(x_0)} \left| V(Du_\rho) - V(z_0) \right|^2 \, dx \right].
		\]
		The first summand tends to 0 by hypothesis. The second summand is equivalent to:
		\[
		\varphi_{|z_0|} \left( \left| \fint_{B_\rho(x_0)} Du(x) - z_0 \, dx \right| \right).
		\]
		By Jensen's inequality, we obtain:
		\[
		\varphi_{|z_0|} \left( \left| \fint_{B_\rho(x_0)} Du(x) - z_0 \, dx \right| \right) \le \fint_{B_\rho(x_0)} \varphi_{|z_0|} \left( |Du(x) - z_0| \right) \, dx.
		\]
		The second member of this inequality is equivalent to:
		\[
		\fint_{B_\rho(x_0)} \left| V(Du(x)) - V(z_0) \right|^2,
		\]
		which vanishes by hypothesis.
	\end{proof}
\end{prop}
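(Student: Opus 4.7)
The plan is to reduce $\Phi_\varphi(u,x_0,\rho)$ to the quantity $\fint_{B_\rho(x_0)}|V(Du)-V(z_0)|^2\,dx$, which vanishes by assumption. The only tools required are the genuine triangle inequality in $\mathbb{R}^{nN}$, Jensen's inequality, and the equivalence $|V(z_1)-V(z_2)|^2 \simeq \varphi_{|z_1|}(|z_1-z_2|)$ recorded in the definition of $V$ together with the relation \eqref{equiv}.

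First I would insert and subtract $V(z_0)$ inside the excess, applying the triangle inequality in $\mathbb{R}^{nN}$ pointwise to obtain
\begin{equation*}
|V(Du(x)) - V((Du)_{B_\rho(x_0)})|^2 \le 2|V(Du(x)) - V(z_0)|^2 + 2|V(z_0) - V((Du)_{B_\rho(x_0)})|^2.
\end{equation*}
Averaging over $B_\rho(x_0)$ and using that the second summand on the right does not depend on $x$ yields
\begin{equation*}
\Phi_\varphi(u,x_0,\rho) \le 2\fint_{B_\rho(x_0)}|V(Du) - V(z_0)|^2\,dx + 2|V((Du)_{B_\rho(x_0)}) - V(z_0)|^2.
\end{equation*}

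The first term vanishes as $\rho \to 0^+$ directly by hypothesis, so it remains only to control the constant term. Invoking the equivalence $|V(z_0) - V((Du)_{B_\rho(x_0)})|^2 \simeq \varphi_{|z_0|}(|(Du)_{B_\rho(x_0)} - z_0|)$ together with the identity $(Du)_{B_\rho(x_0)} - z_0 = \fint_{B_\rho(x_0)}(Du - z_0)\,dx$, the convexity of $\varphi_{|z_0|}$ and Jensen's inequality give
\begin{equation*}
\varphi_{|z_0|}(|(Du)_{B_\rho(x_0)} - z_0|) \le \fint_{B_\rho(x_0)} \varphi_{|z_0|}(|Du - z_0|)\,dx \simeq \fint_{B_\rho(x_0)}|V(Du) - V(z_0)|^2\,dx,
\end{equation*}
which again vanishes by hypothesis, closing the argument.

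The proof presents no real obstacle: it is the triangle inequality followed by Jensen's inequality, with the $V$-functional translating the quantities back to the shifted Young function $\varphi_{|z_0|}$. The only minor care needed is in applying the equivalence with the appropriate shifted function; since $|z_0|>M+1$ is fixed, the constants hidden in $\simeq$ are uniform in $\rho$, so the limit passes through cleanly as $\rho \to 0^+$ without any dependence on how slowly the hypothesis converges.
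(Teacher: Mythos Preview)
Your proof is correct and follows essentially the same route as the paper: split via the triangle inequality with $V(z_0)$ as the intermediate point, dispose of the first term by hypothesis, and handle the second by passing through the equivalence $|V(z_0)-V((Du)_{B_\rho(x_0)})|^2 \simeq \varphi_{|z_0|}(|(Du)_{B_\rho(x_0)}-z_0|)$ together with Jensen and \eqref{equiv}. Your remark that the constants in $\simeq$ are uniform in $\rho$ because $|z_0|$ is fixed is a worthwhile clarification that the paper leaves implicit.
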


Finally, we can prove the excess decay:

\begin{prop}
	Let us assume \( f \), \( \varphi \), and \( \psi \) satisfy hypotheses \((H.0)\) through \((H.4)\) for given \( p_1, q_1 \), and \( M \). Choose any \( L > M + 1 > 0 \), \( \alpha \in (0, 1) \), and \( z_0 \in \mathbb{R}^{nN} \) such that \( L > |z_0| > M + 1 \). Then, there exist constants \( \varepsilon_0 > 0 \), \( \theta \in (0, 1) \), and a radius \( \rho^* > 0 \) depending on \( n, N, L, p_1, q_1, \Gamma, \alpha, \gamma, x_0, z_0 \), and \( \Lambda_L := \max_{B_{L+2}} |D^2f| \), and with \( \varepsilon_0 \) also depending on \( \omega_L \), such that the following result holds. 
	
	Let us consider \( u \) a \( W^{1,\varphi} \)-minimizer of \( \mathcal{F} \) on \( B_\rho(x_0) \), with \( \rho < \rho^* \) and \( x_0 \in \mathbb{R}^n \) satisfying:
	\[
	\lim_{\rho \to 0} \fint_{B_\rho(x_0)} \left| V(Du(x)) - V(z_0) \right|^2 \, dx = 0.
	\]
	If:
	\begin{equation} \label{unouno}
	\Phi_\varphi(u, x_0, \rho) \le \varepsilon_0,
	\end{equation}
	then:
	\[
	\Phi_\varphi(u, x_0, \theta \rho) \le \theta^{2\alpha} \Phi_\varphi(u, x_0, \rho).
	\]
\end{prop}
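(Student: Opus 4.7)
The plan is to run the $\mathcal{A}$-harmonic approximation scheme on $B_\rho(x_0)$, linearising at $z:=(Du)_{B_\rho(x_0)}$, and to feed the resulting decay of the approximating linear map into Caccioppoli on the smaller ball $B_{\theta\rho}(x_0)$. By Proposition \ref{rpfopsemplice} combined with Lemma \ref{puntiregolarivarphi}, there exists $\rho^*=\rho^*(x_0,z_0)$ such that for every $\rho<\rho^*$ we have $M+1<|z|<L$ and $\Phi_\varphi(u,x_0,\rho)\le\varepsilon_0$; by \eqref{equiv}, $\Phi_\varphi(u,x_0,\rho)\simeq\fint_{B_\rho}\varphi_{|z|}(|Du-z|)\,dx$. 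Setting $v(x):=u(x)-zx$, Lemma \ref{diecibis} shows that $v$ is almost $\mathcal{A}$-harmonic with $\mathcal{A}:=D^2 f(z)$, while Lemma \ref{lemmanovebis} tells us that $\mathcal{A}$ satisfies Legendre--Hadamard with ellipticity $2\gamma$ and $|\mathcal{A}|\le \Lambda_L$. Taking $\varepsilon_0$ small enough so that $c\,\omega_L(\Phi_\varphi)\le\delta$, Lemma \ref{Armonicbis} produces an $\mathcal{A}$-harmonic $h\in C^{\infty}(B_{\rho/2};\mathbb{R}^N)$ with
\[
\sup_{B_{\rho/2}}|Dh|+\rho\sup_{B_{\rho/2}}|D^2 h|\le c^{*}\varphi_{|z|}^{-1}(\Phi_\varphi), \qquad \fint_{B_{\rho/2}}\varphi_{|z|}\bigl(\tfrac{|v-h|}{\rho}\bigr)\le\varepsilon\,\Phi_\varphi,
\]
with $\varepsilon>0$ at our disposal.

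Exploiting the $C^{1,1}$ regularity of $h$, a Taylor expansion at $x_0$ gives $\sup_{B_{2\theta\rho}}|Dh-Dh(x_0)|\le c\,\theta\,\varphi_{|z|}^{-1}(\Phi_\varphi)$ for $\theta\in(0,\tfrac14]$, which by $\Delta_2$ and convexity upgrades to $\fint_{B_{2\theta\rho}}\varphi_{|z|}(|Dh-Dh(x_0)|)\le c\,\theta^{2}\Phi_\varphi$. Setting $\tilde z:=z+Dh(x_0)$ and $\tilde q(x):=\tilde z\cdot x$, a combination with the $L^{\varphi_{|z|}}$-closeness of $v$ and $h$ and Poincar\'e yields
\[
\fint_{B_{2\theta\rho}}\varphi_{|z|}\bigl(\tfrac{|u-\tilde q|}{\theta\rho}\bigr)\le c(\theta^{2}+\theta^{-n-2}\varepsilon)\,\Phi_\varphi,
\]
and smallness of $\varepsilon_0$ ensures $M<|\tilde z|<L+1$, so that Caccioppoli is applicable around $\tilde z$.

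Applying Lemma \ref{caccioppolilemmabis} on $B_{2\theta\rho}(x_0)$ with $\tilde q$, and noting that the passage from $\varphi_{|z|}$ to $\varphi_{|\tilde z|}$ costs only a universal constant since $|z|,|\tilde z|\in[M,L+1]$, we obtain
\[
\fint_{B_{\theta\rho}}\varphi_{|\tilde z|}(|Du-\tilde z|)\le c(\theta^{2}+\theta^{-n-2}\varepsilon)\,\Phi_\varphi + c\bigl[(\theta^{2}+\theta^{-n-2}\varepsilon)\Phi_\varphi\bigr]^{\beta}.
\]
Picking first $\theta$ so that $c\theta^{2}\le\tfrac14\theta^{2\alpha}$, then $\varepsilon$ so that $c\theta^{-n-2}\varepsilon\le\tfrac14\theta^{2\alpha}$, and finally $\varepsilon_0$ so small that $\Phi_\varphi^{\beta-1}\le\tfrac12$, the right-hand side drops below $\theta^{2\alpha}\Phi_\varphi$. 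The thesis follows from $|V(Du)-V(\tilde z)|^{2}\simeq\varphi_{|\tilde z|}(|Du-\tilde z|)$ together with the standard bound $\Phi_\varphi(u,x_0,\theta\rho)\le c\fint_{B_{\theta\rho}}|V(Du)-V(\tilde z)|^{2}$.

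The main obstacle I expect is the order-sensitive reabsorption of the perturbation terms: the $\beta$-power term inherited from Caccioppoli forces the usual cascade of choices $\theta\to\varepsilon\to\varepsilon_0$, and it is crucial that one exploits $\beta\ge 1$ together with the smallness of $\Phi_\varphi$ in order to write $\Phi_\varphi^{\beta}\le \Phi_\varphi^{\beta-1}\cdot \Phi_\varphi$ with a small multiplier. A subtler technical annoyance is that $\varphi_a$ is neither homogeneous in $a$ nor in $t$, so every change of base point ($z\rightsquigarrow\tilde z$) and of scale ($\rho\rightsquigarrow\theta\rho$) has to be absorbed by $\Delta_2$ and $(H.0)$; this is precisely the reason why all reference gradients must be kept in the compact range $[M,L+1]$, which in turn is guaranteed by Lemma \ref{puntiregolarivarphi}.
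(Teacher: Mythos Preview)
Your outline is essentially the paper's own argument: linearise at $z=(Du)_{B_\rho}$, invoke Lemma~\ref{lemmanovebis} and Lemma~\ref{diecibis} to feed Lemma~\ref{Armonicbis}, Taylor-expand $h$, and close with Caccioppoli on $B_{2\theta\rho}$ around $\tilde z=z+Dh(x_0)$. The order of choices $\theta\to\varepsilon\to\varepsilon_0$ is also the same.

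There is, however, one genuine omission in your Caccioppoli step. The inequality \eqref{caccioppolibis} reads
\[
\fint_{B_{\theta\rho}}\varphi_{|\tilde z|}(|Du-\tilde z|)\le c\fint_{B_{2\theta\rho}}\varphi_{|\tilde z|}\Bigl(\tfrac{|u-\tilde q|}{2\theta\rho}\Bigr)
+c\Bigl[\fint_{B_{2\theta\rho}}\varphi_{|\tilde z|}(|Du-\tilde z|)+\varphi_{|\tilde z|}\Bigl(\tfrac{|u-\tilde q|}{2\theta\rho}\Bigr)\Bigr]^{\beta},
\]
so the $\beta$-power term carries the \emph{gradient} integral $\fint_{B_{2\theta\rho}}\varphi_{|\tilde z|}(|Du-\tilde z|)$ as well, not only the zero-order term you kept. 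That gradient integral is \emph{not} of size $\theta^2\Phi_\varphi(\rho)$; the best you get is the crude comparison $\Phi_\varphi(2\theta\rho,\tilde z)\le c\,\theta^{-n}\Phi_\varphi(\rho)$ (this is precisely the paper's estimate \eqref{ciccio2}). Consequently the correct post-Caccioppoli bound is
\[
\Phi_\varphi(\theta\rho,\tilde z)\le c\bigl[\theta^2\Phi_\varphi(\rho)+\theta^{2\beta}\Phi_\varphi(\rho)^{\beta}+\theta^{-n\beta}\Phi_\varphi(\rho)^{\beta}\bigr],
\]
and the dangerous contribution is the last one. Your reabsorption ``$\Phi_\varphi^{\beta-1}\le\tfrac12$'' ignores the $\theta^{-n\beta}$ factor and therefore does not close; what is actually needed is $\theta^{-n\beta}\Phi_\varphi(\rho)^{\beta-1}\le \theta^{2}$, i.e.\ $\varepsilon_0$ must be chosen small depending on $\theta$ exactly as in the paper's condition \eqref{settepuntoventitre}. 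Once you insert the missing gradient estimate and strengthen the smallness of $\varepsilon_0$ accordingly, your argument coincides with the paper's.
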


\begin{proof}
Let \( z_0 \) be such that \( |z_0| > M + 1 \), and let \( x_0 \) be any point such that  
\[
\lim_{\rho \to 0} \fint_{B_\rho(x_0)} |V(Du(x)) - V(z_0)|^2 \, dx = 0.
\]
For simplicity of notation, we assume \( x_0 = 0 \) and define  
\[
z = (Du)_\rho := \fint_{B_\rho} Du \, dx
\]
and  
\[
\Phi_\varphi(\cdot) := \Phi_\varphi(u, 0, \cdot).
\]
Here, \( \rho > 0 \) is any positive value small enough (smaller than a \( \rho^* \) that will be determined throughout the proof).  
Since the claim is trivial if there exists a \( \rho \) such that \( \Phi_\varphi(\rho) = 0 \), we assume that \( \Phi_\varphi(\rho) \neq 0 \).

We now define  
\[
w(x) := u(x) - zx,
\]  
and using our equivalent definition of \( \Phi_\varphi(\rho) \), we have  
\[
\fint_{B_\rho} \varphi_{|z|}(|Dw|) \, dx = \Phi_\varphi(\rho).
\]  
Next, we approximate by \( \mathcal{A} \)-harmonic functions, where \( \mathcal{A} := D^2f(z) \).

If \( \rho \) is sufficiently small, we have \( L > |z| > M + 1 \). Hence, from \( |\mathcal{A}| \le \max_{B_{L+2}} |D^2f| =: \Lambda_L \), and by Lemma \ref{duepunticinque}, we deduce that \( \mathcal{A} \) satisfies \eqref{Legendre} with ellipticity constant \( 2\gamma \).  
Lemma \ref{diecibis} yields the estimate

\[
\left| \fint_{B_\rho} \mathcal{A}(Dw, D\phi) \, dx \right| \le C_2 \sqrt{\Phi_\varphi(\rho)} \omega_L\left( \Phi_\varphi(\rho) \right) \sup_{B_\rho} |D\phi|
\]
for all \( \rho < \rho^* \) and for all smooth functions \( \phi : B_{\rho} \to \mathbb{R}^N \) with compact support in \( B_\rho \), where \( C_2 \) is a constant depending on \( n, N, p_1, q_1, \Gamma, L, \Lambda_L \).

For \( \varepsilon > 0 \) to be specified later, we fix the corresponding constant \( \delta(n, N, \varphi, \Lambda_L, \gamma, \varepsilon) > 0 \) from Lemma \ref{Armonicbis}.  
Let \( \varepsilon_0 = \varepsilon_0(n, N, \varphi, \Lambda_L, \gamma, \varepsilon) \) be small enough so that \eqref{unouno} implies  

\begin{equation} \label{settepuntoquindici}
	C_2\omega_{L}(\Phi_\varphi(\rho))\le \delta
\end{equation}
\begin{equation}\label{settepuntosedici}
	\kappa=\sqrt{\Phi_\varphi(\rho)}\le 1.
\end{equation}

We apply Lemma \ref{Armonicbis}, obtaining an \( \mathcal{A} \)-harmonic function \( h \in C^\infty_{\text{loc}}(B_\rho; \mathbb{R}^N) \) such that  
\[
\sup_{B_{\rho/2}} |Dh| + \rho \sup_{B_{\rho/2}} |D^2h| \le c^* \varphi_{|z|}^{-1} \left( \Phi_\varphi(\rho) \right),
\]  
where \( c^* = c^*(n, N, \varphi, \Lambda_L, \gamma) \), and  
\[
\fint_{B_{\rho/2}} \varphi_{|z|} \left( \frac{|w - h|}{\rho} \right) \, dx \le \varepsilon \left[ \Phi_\varphi(\rho) + \varphi_{|z|}(\kappa) \right] \le c \varepsilon \Phi_\varphi(\rho),
\]  
where this step follows from the fact that \( \varphi_{|z|}(t) \simeq t^2 \) when \( t < 1 \).

Now fix \( \theta \in (0, 1/4] \). A Taylor expansion gives the estimate  
\[
\sup_{x \in B_{2\theta \rho}} |h(x) - h(0) - Dh(0) x| \le \frac{1}{2} (2\theta \rho)^2 \sup_{x \in B_{\rho/2}} |D^2h| \le 2 c^* \theta^2 \rho \varphi_{|z|}^{-1} \left( \Phi_\varphi(\rho) \right).
\]
Thus, we obtain  
\[
\fint_{B_{2\theta \rho}} \varphi_{|z|} \left( \frac{|w(x) - h(0) - Dh(0) x|}{2\theta \rho} \right) \, dx \le  
c \left[ \theta^{-q_1 - 1} \fint_{B_{\rho/2}} \varphi_{|z|} \left( \frac{|w - h|}{\rho} \right) \, dx + \fint_{B_{2\theta \rho}} \varphi_{|z|} \left( \frac{|h(x) - h(0) - Dh(0) x|}{2\theta \rho} \right) \, dx \right] \le  
c \left[ \theta^{-q_1 - 1} \varepsilon \Phi_\varphi(\rho) + \varphi_{|z|}(\theta \kappa) \right].
\]
This simplifies to  
\[
c \left[ \theta^{-q_1 - 1} \varepsilon \Phi_\varphi(\rho) + \theta^2 \Phi_\varphi(\rho) \right] \le c \theta^2 \Phi_\varphi(\rho).
\]
Thus,  
\begin{equation}\label{ciccio}
\fint_{B_{2\theta \rho}} \varphi_{|z|} \left( \frac{|u(x) - zx - (h(0) + Dh(0) x)|}{2\theta \rho} \right) \, dx \le c \theta^2 \Phi_\varphi(\rho).
\end{equation}
On the other hand, using the definition of \( s \) and the properties of \( h \), we have  
	\begin{equation} \label{settepuntodiciannove}
	|Dh(0)|^2\le (c^*)^2 \left[\varphi^{-1}_{|z|}\left(\Phi_{\varphi}(\rho)\right)\right]^2.
\end{equation}
Choosing \( \varepsilon_0 \) small enough so that \eqref{unouno} implies  
	\begin{equation} \label{hpiccolo}|Dh(0)|^2\le 1,
\end{equation}
 using also \eqref{settepuntodiciannove}, we get 
\begin{multline} \label{ciccio2}
	\Phi_{\varphi}(2\theta\rho,z+Dh(0))\le\\ \le c\left[(2\theta)^{-n}\left(\fint_{B_\rho}|V(Du(x))-V(z)|^2\,dx+\varphi_{|z|}(|Dh(0)|)\right)\right]\le\\ \le c\left[\theta^{-n}\left(\Phi_\varphi(\rho)+\Phi_\varphi(\rho)\right)\right] \le c\theta^{-n}\Phi_\varphi(\rho).
\end{multline}
Now, combining the Caccioppoli inequality \eqref{caccioppolibis}, \eqref{ciccio} and \eqref{ciccio2} choosing \( \zeta = h(0) \) and \( z + Dh(0) \) in place of \( z \), we obtain  
\[
\Phi_\varphi(\theta \rho, z + Dh(0)) \le c \left[ \theta^2 \Phi_\varphi(\rho) + \theta^{2\beta} \Phi_\varphi(\rho)^\beta + \theta^{-n\beta} \Phi_\varphi(\rho)^\beta \right].
\]
	Now, we can combine \eqref{ciccio} and \eqref{ciccio2} and Caccioppoli inequality \eqref{caccioppolibis} with $\zeta=h(0)$ and $z+Dh(0)$ instead of $z$, and we get:
\begin{equation} \label{settepuntoventuno}
	\Phi_\varphi(\theta\rho,z+Dh(0))\le c\left[\theta^2\Phi_\varphi(\rho)+\theta^{2\beta}\Phi_\varphi(\rho)^{\beta}+\theta^{-n\beta}\Phi_\varphi(\rho)^{\beta}\right].	\end{equation}
Thereby the condition $|z+Dh(0)|\le L+1$ of Lemma \ref{caccioppolilemmabis} can be deduced from \eqref{hpiccolo}.\\
From the condition \( |z + Dh(0)| \le L + 1 \) (which follows from \eqref{hpiccolo}), we can deduce this inequality.  
If \( \varepsilon_0 \) is chosen small enough, depending on \( \theta \), \eqref{unouno} implies  
\begin{equation}\label{settepuntoventitre}
\theta^{-n\beta}\Phi_\varphi(\rho)^{\beta-1}\le \theta^2,
\end{equation}
and from the fact that $\theta\le 1$ we have
$$\Phi_\varphi(\theta\rho,z+Dh(0))\le c\theta^2\Phi_\varphi(\rho).$$
Adapting Lemma 6.2 in \cite{Schmidt} (it just uses simples ideas like the ones from Proposition \ref{rpfopsemplice}) we deduce, from \eqref{settepuntoventitre}:
\begin{equation}\label{settepuntoventiquattro}
\Phi_\varphi(\theta\rho)\le C_3\theta^2\Phi_\varphi(\rho),
\end{equation}
where $C_3>0$ depends on $n,N,\varphi,\Gamma,\gamma,\Lambda_L,L$.\\ Finally, choosing \( \theta \in (0, 1/4] \)  (depending on $\alpha$ and whatever $C_3$ depends on) small enough so that  
 \begin{equation}\label{settepuntoventicinque}
C_3\theta^2\le \theta^{2\alpha}
\end{equation}
holds, and $\varepsilon_0$ small enough such that \eqref{settepuntoquindici}, \eqref{settepuntosedici}, \eqref{settepuntoventitre} follow from \eqref{unouno}. Taking into account \eqref{settepuntoventiquattro} and \eqref{settepuntoventicinque} the proof of the proposition is complete.
\end{proof}

The following adaptation of (\cite{Schmidt}, Lemma 7.10) is a straightforward consequence of repeatedly applying the previous proposition.

\begin{thm} \label{iterato}
	Let us assume that \(f\), \(\varphi\), and \(\psi\) satisfy hypotheses \((H.0)\) through \((H.4)\) for the given values of \(p_1\), \(q_1\), and \(M\).\\
	Choose any \(L > 2M + 2 > 0\), \(\alpha \in (0,1)\), and \(z_0 \in \mathbb{R}^{nN}\) such that \(\frac{L}{2} > |z_0| > M + 1\).\\
	Then, there exist a constant \(\tilde{\varepsilon}_0 > 0\) and a radius \(\rho^* > 0\), depending on \(n\), \(N\), \(L\), \(p_1\), \(q_1\), \(\Gamma\), \(\alpha\), \(\gamma\), \(x_0\), \(z_0\), and \(\Lambda_L := \max\limits_{B_{L+2}} |D^2f|\), where \(\tilde{\varepsilon}_0\) also depends on \(\omega_L\), such that the following holds.\\
	Let \(u\) be a \(W^{1,\varphi}\)-minimizer of \(\mathcal{F}\) on \(B_\rho(x_0)\), with \(\rho < \rho^*\) and \(x_0 \in \mathbb{R}^n\) satisfying
	\[
	\lim\limits_{\rho \to 0} \fint_{B_\rho(x_0)} |V(Du(x)) - V(z_0)|^2 \, dx = 0.
	\]
	If
	\[
	\Phi_\varphi(u,x_0,\rho) \le \tilde{\varepsilon}_0,
	\]
	then there exists a constant \(c\) depending on \(n\), \(N\), \(L\), \(p_1\), \(q_1\), \(\Gamma\), \(\alpha\), \(\gamma\), \(x_0\), and \(z_0\) such that
	\[
	\Phi_\varphi(u,x_0,r) \le c \left( \frac{r}{\rho} \right)^{2\alpha} \Phi_\varphi(u,x_0,\rho)
	\]
	for any \(r < \rho\).
\end{thm}

\begin{proof}
	The theorem stated in the introduction follows from Campanato's integral characterization of H\"older continuity.
\end{proof}

\section{Acknowledgements}
The author is supported by MathInParis project by Fondation Sciences mathématiques de Paris (FSMP), funding from the European Union’s Horizon 2020 research and innovation programme, under the Marie Skłodowska-Curie grant agreement No 101034255. \\
She is also supported by Sorbonne Universite, being affiliated at Laboratoire Jacques Louis Lions (LJLL). \\
During the writing of this paper, she was also a $GNAMPA$ member. \\ $GNAMPA$ supported her with "INdAM -GNAMPA Project 2024", codice CUP E53C23001670001, "Non-smooth optimal control problems" (coordinator: Francesca Angrisani). \\


\begin{thebibliography}{99}
	\bibitem{Acerbi} E. Acerbi, N. Fusco {\em A regularity theorem for minimizers of quasiconvex integrals}, Archive for Rational Mechanics and Analysis, Volume 99, Issue 3, (1987), 261-281;
	\bibitem{AcerbiFusco3} E. Acerbi, N. Fusco {\em Regularity for minimizers of non-quadratic functionals: the case $1<p<2$.}, Journal of Mathematical Analysis and Applications,Volume 140, Issue 1, (1989), 115-135;
	\bibitem{AngrisaniOrlicz} F.Angrisani, G. Ascione, G. Manzo {\em Orlicz spaces with a o–O type structure}, Ricerche di Matematica, 68, (2019),841-857;
	\bibitem{Anzellotti }G. Anzellotti, M. Giaquinta, {\em Convex functionals and partial regularity}, Arch. Ration. Mech. Anal. 102, 1988, 243–272;
	\bibitem{Breit} D. Breit, A. Verde {\em Quasiconvex variational functionals in Orlicz-Sobolev spaces} Annali di Matematica, 192, (2013), 255-271;
	\bibitem{Campanato} S. Campanato {\em Sistemi ellittici in forma di divergenza. Regolarità all'interno} Quaderni della Scuola Normale Superiore, Pisa, 1980;
	\bibitem{Carozza2} M. Carozza, N. Fusco , G. Mingione, {\em Partial regularity of minimizers of quasi-convex integrals with sub quadratic growth}, Ann. Mat. Pura Appl. (4) 175, 1998, 141–164;
	\bibitem{Carozza} M. Carozza, A. Passarelli di Napoli, T. Schmidt, A. Verde{\em Local and Asymptotic Regularity Results for Quasiconvex and Quasimonotone Problems} The Quarterly Journal of Mathematics, 63, (2012), 325-352;
	\bibitem{Chipot} M. Chipot, L.C. Evans: {\em Linearisation at infinity and Lipschitz estimates for certain problems in the calculus of variations}, Proc. Roy. Soc. Edinburgh Sect. A, 102 (1986), 291-303;
	\bibitem{Cupini}  G. Cupini, M. Guidorzi, E. Mascolo, {\em Regularity of minimizers of vectorial integrals with $(p,q)$ growth},	Nonlinear Anal. 54, 2003, 591–616;
	\bibitem{Dacorogna} B. Dacorogna, {\em Quasiconvexity and relaxation of nonconvex problems in the calculus of variations}, J.Funct. Anal. 46 (1982), 102–118;
	\bibitem{Diening} L Diening, D Lengeler, B Stroffolini, A Verde {\em Partial regularity for minimizers of quasi-convex functionals with general growth} - SIAM Journal on Mathematical Analysis, 2012 - SIAM;
		\bibitem{Mingio2}  L. Esposito , F. Leonetti , G. Mingione   {\em Regularity for minimizers of functionals with $(p,q)$ growth}, NODEA- NonLinear Differential Equations and Applications, 1999, 6, 133-148;
	\bibitem{Mingio} L. Esposito , F. Leonetti , G. Mingione, { \em Sharp regularity for functionals with $(p, q)$- growth}, J. Differ.Equ. 204, 2004, 5–55;
	\bibitem{Kristensen2}G. Dolzmann , J. Kristensen, { \em Higher integrability of minimizing Young measures}, Calc. Var. Partial Differential Equations 22, 2005, 283–301;
	\bibitem{Kristensen} G. Dolzmann, J. Kristensen, K. Zhang, {\em BMO and uniform estimates for multi-well problems}, Manuscripta Math. 140, 2013, no. 1-2, 83–114
	\bibitem{Fey} K. Fey, M. Foss {\em Morrey regularity results for asymptotically convex variational problems with $(p,q)$ growth} Journal of Differential Equations, 246, 2019, 4519-4551;
	\bibitem{Duzaar}  F. Duzaar , J.F. Grotowski, {\em Optimal interior partial regularity for nonlinear elliptic systems: The method of A-harmonic approximation}, Manuscripta Math. 103, 2000, 267–298.
	\bibitem{Focardi} M. Focardi, {\em Semicontinuity of vectorial functionals in Orlicz-Sobolev spaces}, Rend. Istit. Mat. Univ.	Trieste vol. XXIX, 1997, 141–161;
	\bibitem{Fonseca} I. Fonseca, J. Maly {\em Relaxation of multiple integrals below the growth exponent.} Ann. Inst. Henri Poincarè, Analyse Non Linéaire 14,  1997, 309-338;
	\bibitem{Foss} M. Foss, A. Passarelli di Napoli and A. Verde, {\em Global Morrey regularity results for asymptotically convex variational problems}, Forum Math. 20 2008, 921–953;
		\bibitem{Giaquinta} M. Giaquinta {\em Multiple integrals in the calculus of variations and non-linear elliptic systems}, Ann. Mat. Studies 105. Princeton University. Press Princeton, 1983;
	\bibitem{GiannettI} F. Giannetti, A. Verde
	{\em Lower semicontinuity of a class of multiple integrals below the growth exponent},Ann. Fac. Sci. Toulouse Math.,2001, 10(2), 299-311;
	\bibitem{Giusti} Giusti {\em Direct Methods in the Calculus of Variations} 2003;
	\bibitem{Leone2} T. Isernia, C. Leone, A. Verde {\em Partial regularity result for non-autonomous elliptic systems with general growth}, Communications on Pure and Applied Analysis, 2021, 20(12), pp. 4271-4305;
	\bibitem{Leone} T. Isernia, C. Leone, A. Verde {\em Partial regularity results for asymptotic quasiconvex functionals with general growth}, Annales Academiae Scientiarum Fennicae Mathematica 41, 2016, 817-844;
	\bibitem{Leone3} C. Leone, A. Passarelli di Napoli and A. Verde, {\em Lipschitz regularity for some asymptotically subquadratic problems.}, Nonlinear Anal. 67, 2007, 1532–1539;
	\bibitem{Marcellini} P. Marcellini {\em Approximation of quasiconvex functions, and lower semicontinuity of multiple integrals}, manuscripta mathematica, Springer-Verlag 1985, 1-28;
	\bibitem{Gioconda} G. Moscariello, {\em Regularity results for quasiminima of functionals with non-polynomial growth}, Journal of Mathematical Analysis and Applications,1992,168(2),500-510;
	\bibitem{Rao} M. Rao and Z. D. Ren, {\em Theory of Orlicz spaces. Monographs and Textbooks in Pure and Applied	Mathematics} 146 Marcel Dekker, Inc., New York, 1991;
	\bibitem{Raymond}  J.P. Raymond, {\em Lipschitz regularity of solutions of some asymptotically convex problems}, Proc. Roy. Soc. Edinburgh Sect. A 117, 1991, 59–73;
	\bibitem{Schmidt2} C. Scheven , T. Schmidt, {\em Asymptotically regular problems II: Partial Lipschitz continuity and a singular	set of positive measure}, Ann. Scuola Norm. Sup. Pisa Cl. Sci. (5) 8, 2009, 469–507;
	\bibitem{Schmidt} T. Schmidt {\em Regularity of minimizers of $W^{1,p}$-quasiconvex variational integrals with $(p,q)$-growth} Calculus of Variations, 32, 2008, 1-24;
	\bibitem{Simon} L. Simon, {\em Theorems on Regularity and Singularity of Energy Minimizing Maps}, Lectures Math. ETH Z¨urich, Birkh¨auser Verlag, Basel, 1996.
	
	
\end{thebibliography}
\end{document}